\newtheorem{theorem}{Theorem}
\newtheorem{example}{Example}
\newtheorem{lemma}{Lemma}
\newtheorem{proposition}[theorem]{Proposition}
\newenvironment{proof}[1][Proof]{\noindent\textbf{#1.} }{\ \rule{0.5em}{0.5em}}
\DeclareSymbolFont{AMSb}{U}{msb}{m}{n}
\DeclareMathSymbol{\N}{\mathbin}{AMSb}{"4E}
\DeclareMathSymbol{\Z}{\mathbin}{AMSb}{"5A}
\DeclareMathSymbol{\R}{\mathbin}{AMSb}{"52}
\DeclareMathSymbol{\Q}{\mathbin}{AMSb}{"51}
\DeclareMathSymbol{\I}{\mathbin}{AMSb}{"49}
\DeclareMathSymbol{\C}{\mathbin}{AMSb}{"43}
\title{Estimating the $k$-th coefficient of $\left( f\left( z\right) \right)
^{n}$ when $k$ is not too large}
\author{Valerio De Angelis\\ Mathematics Department\\
Xavier University of Louisiana}
\date{}
\begin{document}
\maketitle



\begin{abstract}
We derive asymptotic estimates for the coefficient of $z^{k}$ in $\left(
f\left( z\right) \right) ^{n}$ when $n\rightarrow \infty $ and $k$ is of
order $n^{\delta }$, where $0<\delta <1,$ and $f\left( z\right) $ is a power
series satisfying suitable positivity conditions and with $f\left( 0\right)
\neq 0,$ $f^{\prime }\left( 0\right) =0.$ We also show that there is a
positive number $\varepsilon <1$ (easily computed from the pattern of non-zero
coefficients of $f\left( z\right) $) such that the same coefficient is
positive for large $n$ and $\varepsilon <\delta <1$, and admits an
asymptotic expansion in inverse powers of $k$. We use the asymptotic
estimates to prove that certain finite sums of exponential and trigonometric
functions are non-negative, and illustrate the results with examples.

\end{abstract}

\section{Introduction}
Let $f\left( z\right) $ be a power series in $z$ with real coefficients and
positive radius of convergence $R$. In this paper, we consider the problem of asymptotic
evaluation of the coefficients of large powers of $f(z)$. We will denote the coefficient of $x^k$ in 
$f(z)$ by $\widehat{f}\left( k\right) $, and the coefficient of $x^k$ in 
$\left(f(z)\right)^n$ by $\widehat{f^n}\left( k\right) $. As we will soon discuss, 
we are especially interested in the case that the parameter $k$ goes to infinity with $n$, but in
such a way that the ratio $k/n$ goes to zero.
Since multiplication of $f\left( z\right) $ by
a monomial $cz^{m}$ simply corresponds to replacing $\widehat{f}\left(
k\right) $ by $c\widehat{f}\left( k-m\right) $, we assume throughout that 
$f\left( 0\right) =\widehat{f}\left( 0\right) =1.$

The coefficients $\widehat{f}\left( k\right) $ can be represented by the
integral formula 
 \begin{equation}
\hat{f}(k)=\frac{1}{2\pi r^{k}}\int_{-\pi }^{\pi }f(re^{i\theta
})e^{-ik\theta }d\theta ,  \label{int}
 \end{equation}%
where $r$ is any positive number less than $R$. In fact, the right side of $%
\left( \ref{int}\right) $ is defined for every real number $k$, and so we
can define $\hat{f}(k)$ for real $k$ by the same formula.

Replacing $f\left( z\right) $ with $\left( f\left( z\right) \right) ^{n}$,
the study of the corresponding integral formula 
 \begin{equation}
\widehat{f^{n}}(k)=\frac{1}{2\pi r^{k}}\int_{-\pi }^{\pi }\left(
f(re^{i\theta })\right) ^{n}e^{-ik\theta }d\theta   \label{int1}
 \end{equation}%
when $n$ is large provides asymptotic estimates for the coefficients of
large powers of $f\left( z\right) .$ 

The freedom of choice for $r$ in such integral representations provides a
powerful analytic method for many asymptotic evaluations, and this approach has
been successfully used by several authors \cite{MW1}, \cite{MW2}, \cite{Hay}%
, \cite{OR}. See also the survey articles \cite{Od}, \cite{Od2} (containing
an extensive bibliography), and the book  \cite[Ch. VIII]{Flaj}. The
article \cite{Gardy} reviews results in the case that $n$ and $k$ remain
roughly proportional, and under the assumptions that all coefficients of $%
f\left( z\right) $ are positive, and $f\left( 0\right) \neq 0$, $f^{\prime
}\left( 0\right) \neq 0$. The same article presents extensions to the case
that $k$ and $n$ have more general relationships, and when a multiplicative
factor $\psi \left( z\right) $ is introduced (so that the coefficients of $%
\psi \left( z\right) \left( f\left( z\right) \right) ^{n}$ are the object of
study). 

When all coefficients  of $f(z)$ are positive, the main contribution to the 
integral $\left( \ref{int}\right) $ for large $n$ comes from a small neighborhood of $\theta =0$,
because the function $\theta \mapsto \left\vert f(re^{i\theta
})\right\vert $ has an absolute maximum there. If $f'(0)=0$, there will be other relative maxima 
of the same function in $[-\pi,\pi]$, but the corresponding contributions to the integral
will be negligible {\em provided} $k$ remains large enough when $n$ grows.

The present article addresses the problem of deriving full asymptotic expansions (or
upper bound estimates when a full expansion is not possible) for the 
coefficients $\widehat{f}\left( k\right) $ when the
ratio $k/n$ is in an especially delicate range: large enough so that the
integral representation method is still useful, but not large enough to
neglect the contributions to the integral $\left( \ref{int}\right) $ arising
from the relative maxima of $\theta \mapsto \left\vert f(re^{i\theta
})\right\vert $ at points other than $\theta =0.$
The condition that all coefficients be positive will be replaced by the the following weaker condition.
We say that $f$ is \emph{strongly
positive at} $r$ if%
 \[
|f(re^{i\theta })|<f(r)\ \ \ \mbox{for}\ 0<\theta <2\pi
 \]%
and we say that $f$ is \emph{strongly positive }if it is strongly positive at every $r$. If $f$
is strongly positive at $r$, we define $\mu _{f}(r)=rf^{\prime }(r)/f(r)$,
and $\sigma _{f}\left( r\right) =r\mu _{f}^{\prime }(r).$

When $f\left( z\right) $ is a polynomial, it is easy to check that $\mu
_{f}(r)\rightarrow 0$ as $r\rightarrow 0$ (remember our standing assumption
that $f\left( 0\right) =1$), and $\mu _{f}(r)\rightarrow \deg f$ as $%
r\rightarrow \infty $. It is proved in \cite[Theorem 6.4]{pcp} that strong
positivity on an open interval containing $r$ is enough to ensure that $%
\sigma _{f}\left( r\right) >0$. Hence for strongly positive polynomials, $%
\mu _{f}$ provides a bijection from $\left( 0,\infty \right) $ to $\left(
0,\deg f\right) $. Since by definition $\widehat{f}\left( k\right) =0$ for $%
k\notin \left[ 0,\deg f\right] $ (and the boundary points $0$ and $\deg f$
are trivial), we can estimate all coefficients of $\left( f\left( z\right)
\right) ^{n}$ if we can estimate $\widehat{f^{n}}\left( n\mu _{f}(r)\right) $
for all $r\in \left( 0,\infty \right) $.

In the general case of a strongly positive power series with radius of
convergence $R$, $\mu _{f}$ is still strictly increasing on $\left(
0,R\right) $, and $\mu _{f}(r)\rightarrow 0$ as $r\rightarrow 0$. 
One of the results of \cite[Theorem 4.1]{ae} is to show that 
$\widehat{f^{n}}\left( n\mu _{f}(r)\right) $ has an asymptotic expansion with dominant term 
$\frac{\left( f(r)\right) ^{n}}{r^{n\mu _{f}(r)}\sqrt{%
2\pi n\sigma _{f}\left( r\right) }}$ as $n\rightarrow \infty$ (where $r$ is defined by
$\mu_f(r)=k/n$), but only under the 
assumption that either $k/n$ remains bounded away from zero, or $f'(0)\neq 0$.
In the present work, we investigate the case 
$f^{\prime }\left( 0\right) =0$ and $%
k/n\rightarrow 0$. We find that there is a positive number $\epsilon
_{0}=\epsilon _{0}\left( f\right) <1$ (easily calculated from the pattern of
non-zero coefficients of $f$) such that $\widehat{f^{n}}\left( k\right) $ has a 
similar asymptotic expansion, with same dominant term, whenever $k$ is of
order $n^{\delta }$ for some $\delta \in \left( \epsilon _{0},1\right) .$

If $k$ is of order $n^{\delta }$ with $0<\delta \leq \epsilon _{0}$, then
the problem becomes more difficult, and the initial pattern of non-zero
coefficients plays an increasingly important role as $\delta $ decreases. An
asymptotic expression for the coefficients $\widehat{f^{n}}(k)$ when $n$ is
large can still be found. However, in some cases the expression found this
way vanishes, and then we only obtain an upper bound for the asymptotic size
of the coefficient. In this case, it may be possible to obtain the exact
asymptotic beahvior by considering higher order terms, but this would require
considerable more work.

As an unintended consequence of the asymptotic estimates, we obtain a
curious result on the non-negativity of certain finite sums involving
trigonometric and exponential functions that we cannot prove in any other
way. We also provide examples with $0<\delta \leq \epsilon _{0}$ illustrating both the
case in which the expression derived is strictly positive, and the case in
which it vanishes. While the case $k$ bounded can easily be treated
with Stirling's formula, if $k$ goes to infinity slower than a power of $n$,
the methods of this paper do not provide any estimate for the coefficients $%
\widehat{f^{n}}(k)$, and the problem of their estimate in that range remains
open.

This paper complements and extends the work done in \cite{ineq} and \cite{ae}%
. For the reader's convenience, in the next section we summarize all the
results from those papers needed here.

\section{Summary of needed results from previous work}

We will assume throughout that $f$ is a non-constant power series with
positive radius of convergence, normalized such that $f\left( 0\right) =1$,
and strongly positive at all sufficiently small $r>0.$ We define $L\left(
f\right) =\left\{ k>0:\widehat{f}\left( k\right) \neq 0\right\} .$ If $\gcd
L\left( f\right) =d>1$, we can write $f\left( z\right) =g\left( z^{d}\right) 
$ for some power series $g$ such that $\gcd L\left( g\right) =1$, and then $%
\widehat{f}\left( k\right) =0$ if $k$ is not a multiple of $d$, while $%
\widehat{f}\left( dk\right) =\widehat{g}\left( k\right) $. Hence, without
losing generality, we will assume that $\gcd L\left( f\right) =1$.

Let $l=l\left( f\right) =\min L\left( f\right) $. Then our standing condition $%
f^{\prime }\left( 0\right) =0$ is equivalent to $l>1.$
If $m=\left[ l/2\right] $ is the greatest integer not
exceeding $l/2$, and $1\leq j\leq m$, there is some integer $k\in L\left(
f\right) $ such that $jk$ is not a multiple of $l.$ We define $l_{j}=\min
\left\{ k\in L\left( f\right) :jk\not\equiv 0\pmod{l}
\right\} ,$ \ $1\leq j\leq m,$ and $l_{0}=l$ (the set $\left\{ l_{j}:0\leq
j\leq m\right\} $ of exponents was first considered by Hayman in \cite{Hay2}).

The results from previous work that will be needed here are labeled (I)--(VI) below. We remark that in
order to derive only the asymptotic estimates of Theorem $\ref{T1}$ and the
subsequent examples (in other words, if the full asymptotic expansion of
Theorem \ref{T0} is not needed), only (I)--(V) are required. Since (I)
and (II) amount to easily checked estimates, all the required background for
the asymptotic estimates and examples are contained in \cite{ineq}. While
that paper treats the polynomial case, all the results quoted below remain
valid for power series, with virtually identical proofs.

\begin{enumerate}
\item[(I)] [See Lemma 5.2 of \cite{ae}, or Section 4 of \cite{pcp}] As $%
r\rightarrow 0^{+}$, we have
\begin{enumerate}
\item[(a)]%
  \[
\mu _{f}\left( r\right) =l\widehat{f}\left( l\right) r^{l}+O\left(
r^{l+1}\right) \mbox{,}  \label{a}
  \]%
and%
\item[(b)]
  \[
\sigma _{f}\left( r\right) =l^{2}\widehat{f}\left( l\right) r^{l}+O\left(
r^{l+1}\right) .  \label{b}
  \]
\end{enumerate}
\item[(II)] If we define the function $\phi \left( r,z\right) $ on the set 
 \[
V=\left\{ \left( r,z\right) \in \left[ 0,\infty \right) \times \C:%
\mbox{Re}f\left( re^{z}\right) >0\right\}
 \]%
by the equation%
 \[
\frac{f\left( re^{z}\right) }{f\left( r\right) }=\exp \left( \mu _{f}\left(
r\right) z+\frac{1}{2}\sigma _{f}\left( r\right) z^{2}\phi \left(
r,z\right) \right)
 \]%
for $z\neq 0$, $r>0$, and 
\begin{eqnarray*}
\phi \left( r,0\right) &=&1, \\
\phi \left( 0,z\right) &=&2\frac{e^{lz}-1-lz}{\left( lz\right) ^{2}}%
=1+2\sum_{k=0}^{\infty }\frac{\left( lz\right) ^{k+1}}{\left( k+3\right) !},
\end{eqnarray*}%
then we can write 
 \[
\phi \left( r,z\right) =2\frac{e^{lz}-1-lz}{\left( lz\right) ^{2}}%
+rzH\left( r,z\right) ,
 \]%
where $H\left( r,z\right) $ is a power series in $r$ and $z$ (this the
version of Lemma 5.3 of \cite{ae} for the case $l>1$).

\item[(III)] Strong positivity of $f$ at all sufficiently small $r>0$ is
equivalent to the condition $\widehat{f}\left( l_{j}\right) >0$ for $0\leq
j\leq m$ (Theorem 1 of \cite{ineq}, or Theorem II of \cite{Hay2}).

\item[(IV)] There is some $r_{0}>0$ such that for each $r\in \left( 0,r_{0}%
\right] $, the function $\theta \mapsto \left\vert f\left( re^{i\theta
}\right) \right\vert $ has precisely $m$ local maxima in the interval $%
\left( 0,\pi \right] $, and if we denote them by $\widetilde{\theta }%
_{j}\left( r\right) $, $1\leq j\leq m$, then the functions $r\mapsto $ $%
\widetilde{\theta }_{j}\left( r\right) $ are continuous on $\left( 0,r_{0}%
\right] $, and $\lim\limits_{r\rightarrow 0^{+}}\widetilde{\theta }%
_{j}\left( r\right) =\frac{2\pi j}{l}.$ If $l$ is even, then $\widetilde{%
\theta }_{m}\left( r\right) =\pi $ for all $r\in \left( 0,r_{0}\right] $.
(Proposition 2 of \cite{ineq}).

\item[(V)] Let $\theta _{j}=\frac{2\pi j}{l}$, $1\leq j\leq m$. If $l>2$,
define%
 \[
l_{j}^{\prime }=\min \left\{ k\in L\left( f\right) :2jk\not\equiv 0\pmod{l} \right\} ,\mbox{ \ }1\leq j<l/2.
 \]%
(Note that by definition of $l_{j}\,$and $l_{j}^{\prime }$, we have $%
l_{j}^{\prime }\geq l_{j},$ $\exp \left( il_{j}\theta _{j}\right) \neq 1$, $%
\sin \left( \theta _{j}l_{j}^{\prime }\right) \neq 0$, and if $l$ is odd,
then $l_{j}^{\prime }=l_{j}$). Then, if $l>2$ and $j<l/2,$ we have (Lemma 3
of \cite{ineq}):%
 \[
\widetilde{\theta }_{j}\left( r\right) =\theta _{j}-r^{l_{j}^{\prime }-l}%
\frac{l_{j}^{\prime }\widehat{f}\left( l_{j}^{\prime }\right) }{l^{2}%
\widehat{f}\left( l\right) }\sin \left( \theta _{j}l_{j}^{\prime }\right)
\left( 1+O\left( r\right) \right) \mbox{ \ as }r\rightarrow 0^{+}.
 \]

\item[(VI)] \label{prop} [Proposition 2.2 of \cite{ae}] Suppose that $\phi
(z)$ is analytic in a neighborhood of $z=0$, with $\phi (0)=1$, and let $%
\varepsilon >0$ be such that $|\phi (z)-1|\leq 1/2$ for $|z|\leq
2\varepsilon $. Let 
 \[
G(\lambda )=\int_{-\varepsilon }^{\varepsilon }e^{-\lambda ^{2}\theta
^{2}\phi (i\theta )}d\theta .
 \]%
Then as $\lambda \rightarrow \infty $, $G\left( \lambda \right) $ has the
asymptotic expansion%
 \[
G(\lambda )\sim \frac{\sqrt{\pi }}{\lambda }\left( 1+\sum_{\nu =1}^{\infty
}a_{\nu }\lambda ^{-2\nu }\right) \mbox{, where \ }a_{\nu }=\frac{(-1)^{\nu }}{%
4^{\nu }\nu !}\left[ \frac{d ^{2\nu }}{d z^{2\nu }}\left( \phi
(z)\right) ^{-\nu -1/2}\right] _{z=0}.
 \]
\end{enumerate}

\section{Expansions near the local maxima}

We begin by further normalizing $f\left( z\right) $. If $g\left( z\right)
=f\left( az\right) $, then clearly $\widehat{g}\left( k\right) =a^{k}%
\widehat{f}\left( k\right) $. Hence no generality is lost if we choose a
suitable $a$ that will simplify our formulas, and in view of (I) of the previous
section, we will from now on assume that $l\widehat{f}\left( l\right) =1.$
So we consider powers series $f\left( z\right) $ of form $f\left( z\right)
=1+\frac{1}{l}x^{l}+\widehat{f}\left( l+1\right) z^{l+1}+\cdots .$

The next lemma is a refinement of Proposition 4 of \cite{ineq}.

\begin{lemma}
\label{L1}Let $1\leq j\leq m$, and let $\widetilde{\theta }_{j}\left(
r\right) $, $\theta _{j}$ be as in (IV) and (V). Define%
 \[
A_{j}=\widehat{f}\left( l_{j}\right) \left( e^{il_{j}\theta _{j}}-1\right) ,
 \]%
Then we have%
 \[
e^{-i\widetilde{\theta }_{j}\left( r\right) }\frac{f\left( re^{i\widetilde{%
\theta }_{j}\left( r\right) }\right) }{f\left( r\right) }=\exp \left( -i\mu
\theta _{j}+A_{j}r^{l_{j}}+O\left( r^{l_{j}+1}\right) \right) \mbox{ \ as }%
r\rightarrow 0^{+}.
 \]
\end{lemma}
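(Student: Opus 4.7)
The plan is to compute $f(re^{i\widetilde{\theta}_j(r)})$ by direct power-series expansion in $r$, split the sum according to the residue of $jk$ modulo $l$, and then divide by $f(r)$ and reintroduce the prefactor. Set $\delta_j(r) = \widetilde{\theta}_j(r) - \theta_j$; by (V) one has $\delta_j(r) = O(r^{l_j' - l})$, and by (I)(a), under the normalization $l\widehat{f}(l) = 1$, one has $\mu_f(r) = r^l + O(r^{l+1})$. The starting identity is
\[
f(re^{i\widetilde{\theta}_j(r)}) = \sum_{k} \widehat{f}(k)\,r^k\,e^{ik\theta_j}\,e^{ik\delta_j(r)},
\]
with the sum running over $k=0$ and $k \in L(f)$, and one Taylor-expands $e^{ik\delta_j(r)} = 1 + ik\delta_j(r) - \frac{1}{2}k^2\delta_j(r)^2 + \cdots$.

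For indices $k$ with $jk \equiv 0 \pmod{l}$ the factor $e^{ik\theta_j}$ equals $1$, and the total contribution is $S_j(r) + i\delta_j(r)\,S_j'(r) + O(\delta_j(r)^2 r^l)$, where $S_j(r)$ and $S_j'(r)$ denote the partial sums of $f(r)$ and $rf'(r)$ restricted to this congruence class. Since $l_j$ is the smallest $k \in L(f)$ with $jk \not\equiv 0 \pmod l$, every $k \in L(f)$ with $k<l_j$ lies in this class, giving $S_j(r) = f(r) - \widehat{f}(l_j)r^{l_j} + O(r^{l_j+1})$ and $S_j'(r) = \mu_f(r)f(r) - l_j\widehat{f}(l_j)r^{l_j} + O(r^{l_j+1})$. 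For indices with $jk \not\equiv 0 \pmod l$, the smallest is $k = l_j$, contributing $\widehat{f}(l_j)r^{l_j}e^{il_j\theta_j}(1+O(\delta_j(r)))$, while the tail is $O(r^{l_j+1})$. Collecting everything and absorbing the cross term $\delta_j(r)\,r^{l_j} = O(r^{l_j + l_j' - l})$ and the quadratic $\delta_j(r)^2 r^l = O(r^{2l_j' - l})$ into $O(r^{l_j+1})$ by means of $l_j' \geq l_j \geq l+1$, one reaches
\[
f(re^{i\widetilde{\theta}_j(r)}) = f(r) + A_j r^{l_j} + i\,\delta_j(r)\,\mu_f(r)\,f(r) + O(r^{l_j+1}).
\]

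Finally, dividing by $f(r) = 1 + O(r^l)$ and multiplying by the prefactor (taking the natural reading $\mu = \mu_f(r)$), one expands $e^{-i\mu\widetilde{\theta}_j(r)} = e^{-i\mu\theta_j}\bigl(1 - i\mu\,\delta_j(r) + O(r^{2l_j'})\bigr)$. The linear $-i\mu\,\delta_j(r)$ cancels exactly against the $+i\,\delta_j(r)\,\mu_f(r)$ from the previous display, leaving $e^{-i\mu\theta_j}\bigl(1 + A_j r^{l_j} + O(r^{l_j+1})\bigr)$, whose logarithm produces the stated identity. The main technical obstacle is the case $l_j' = l_j$ (which occurs when $l$ is odd): there the term $i\,\delta_j(r)\,\mu_f(r)$ is of the same order $r^{l_j}$ as the principal term $A_j r^{l_j}$ and so cannot be relegated to the error, and the claimed formula rests on the exact cancellation provided by the prefactor, which in turn relies on the precise leading-order expression for $\widetilde{\theta}_j(r) - \theta_j$ furnished by (V).
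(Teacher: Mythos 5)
Your argument is correct and essentially reproduces the paper's own proof: expand $f(re^{i\widetilde{\theta}_j(r)})$ around $\theta_j$ using the order of $\widetilde{\theta}_j(r)-\theta_j$ supplied by (V), isolate the $A_j r^{l_j}$ contribution, and cancel the remaining purely imaginary linear term against the prefactor $e^{-i\mu\widetilde{\theta}_j(r)}$ (the paper exhibits the cancelling terms $\mp iB_j r^{l_j'}$ via the explicit coefficient in (V), whereas you keep $\delta_j(r)$ exact so the cancellation is identity-level --- a harmless, slightly cleaner variant of the same mechanism, and your reading of the prefactor with the factor $\mu=\mu_f(r)$ is indeed the intended one). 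The only point to add is the case $l$ even, $j=m$, where $l_j'$ is undefined and (V) does not apply: there (IV) gives $\widetilde{\theta}_m(r)\equiv\pi$, so $\delta_m\equiv 0$ and all your estimates hold trivially, which is exactly how the paper disposes of that case (by setting $B_m=0$).
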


\begin{proof}
Define%
 \[
B_{j}=\left\{ 
\begin{tabular}{ll}
$0$ & $\mbox{if }l\mbox{ is even and }j=m,$ \\ 
$\frac{l_{j}^{\prime }\widehat{f}\left( l_{j}^{\prime }\right) }{l}\sin
\left( \theta _{j}l_{j}^{\prime }\right) $ & $\mbox{in all other cases.}$%
\end{tabular}%
\right.
 \]%

Using (V) above, we find 
\begin{eqnarray*}
&&f\left( re^{i\widetilde{\theta }_{j}\left( r\right) }\right) \\
&=&\sum_{k}\widehat{f}\left( k\right) r^{k}\exp \left( ik\left( \theta
_{j}-B_{j}r^{l_{j}^{\prime }-l}+O\left( r^{l_{j}^{\prime }-l+1}\right)
\right) \right) \\
&=&\sum_{k<l_{j}}\widehat{f}\left( k\right) r^{k}\exp \left( ik\left(
-B_{j}r^{l_{j}^{\prime }-l}+O\left( r^{l_{j}^{\prime }-l+1}\right) \right)
\right) \\
&&+\widehat{f}\left( l_{j}\right) e^{il_{j}\theta _{j}}r^{l_{j}}+O\left(
r^{l_{j}+1}\right) \mbox{ \ \ \ (because }e^{ik\theta _{j}}=1\mbox{ for }%
k<l_{j}\mbox{, and }l_{j}^{\prime }-l\geq 1\mbox{)} \\
&=&1+\sum_{l\leq k<l_{j}}\widehat{f}\left( k\right) r^{k}\left(
1-ikB_{j}r^{l_{j}^{\prime }-l}+O\left( r^{l_{j}^{\prime }-l+1}\right) \right)
\\
&&+\widehat{f}\left( l_{j}\right) e^{il_{j}\theta _{j}}r^{l_{j}}+O\left(
r^{l_{j}+1}\right) \mbox{ \ \ \ \ (because }\widehat{f}\left( 0\right) =1%
\mbox{, and }\widehat{f}\left( k\right) =0\mbox{ for }0<k<l\mbox{)} \\
&=&1+\sum_{l\leq k<l_{j}}\widehat{f}\left( k\right)
r^{k}-iB_{j}r^{l_{j}^{\prime }}+O\left( r^{l_{j}^{\prime }+1}\right) +%
\widehat{f}\left( l_{j}\right) e^{il_{j}\theta _{j}}r^{l_{j}}+O\left(
r^{l_{j}+1}\right) \\
&=&\sum_{0\leq k<l_{j}}\widehat{f}\left( k\right)
r^{k}-iB_{j}r^{l_{j}^{\prime }}+\widehat{f}\left( l_{j}\right)
e^{il_{j}\theta _{j}}r^{l_{j}}+O\left( r^{l_{j}+1}\right) \mbox{ \ \
(because }l_{j}^{\prime }\geq l_{j}\mbox{).}
\end{eqnarray*}%
Since 
 \[
f\left( r\right) =\sum\limits_{0\leq k<l_{j}}\widehat{f}\left( k\right)
r^{k}+\widehat{f}\left( l_{j}\right) r^{l_{j}}+O\left( r^{l_{j}+1}\right) 
\mbox{ \ and }\left( \sum\limits_{0\leq k<l_{j}}\widehat{f}\left( k\right)
r^{k}\right) ^{-1}=1+O\left( r^{l}\right) ,
 \]%
we find%
\begin{eqnarray*}
\frac{f\left( re^{i\widetilde{\theta }_{j}\left( r\right) }\right) }{%
f\left( r\right) } &=&\frac{\sum\limits_{0\leq k<l_{j}}\widehat{f}\left(
k\right) r^{k}-iB_{j}r^{l_{j}^{\prime }}+\widehat{f}\left( l_{j}\right)
e^{il_{j}\theta _{j}}r^{l_{j}}+O\left( r^{l_{j}+1}\right) }{%
\sum\limits_{0\leq k<l_{j}}\widehat{f}\left( k\right) r^{k}+\widehat{f}%
\left( l_{j}\right) r^{l_{j}}+O\left( r^{l_{j}+1}\right) } \\
&=&\frac{1+\left( \widehat{f}\left( l_{j}\right) e^{il_{j}\theta
_{j}}r^{l_{j}}-iB_{j}r^{l_{j}^{\prime }}+O\left( r^{l_{j}+1}\right) \right)
\left( \sum\limits_{0\leq k<l_{j}}\widehat{f}\left( k\right) r^{k}\right)
^{-1}}{1+\left( \widehat{f}\left( l_{j}\right) r^{l_{j}}+O\left(
r^{l_{j}+1}\right) \right) \left( \sum\limits_{0\leq k<l_{j}}\widehat{f}%
\left( k\right) r^{k}\right) ^{-1}} \\
&=&\frac{1-iB_{j}r^{l_{j}^{\prime }}+\widehat{f}\left( l_{j}\right)
e^{il_{j}\theta _{j}}r^{l_{j}}+O\left( r^{l_{j}+1}\right) }{1+\widehat{f}%
\left( l_{j}\right) r^{l_{j}}+O\left( r^{l_{j}+1}\right) } \\
&=&\left( 1-iB_{j}r^{l_{j}^{\prime }}+\widehat{f}\left( l_{j}\right)
e^{il_{j}\theta _{j}}r^{l_{j}}+O\left( r^{l_{j}+1}\right) \right) \\
&&\cdot \left( 1-\widehat{f}\left( l_{j}\right) r^{l_{j}}+O\left(
r^{l_{j}+1}\right) \right) \\
&=&1+\widehat{f}\left( l_{j}\right) \left( e^{il_{j}\theta _{j}}-1\right)
r^{l_{j}}-iB_{j}r^{l_{j}^{\prime }}+O\left( r^{l_{j}+1}\right) \\
&=&\exp \left( A_{j}r^{l_{j}}-iB_{j}r^{l_{j}^{\prime }}+O\left(
r^{l_{j}+1}\right) \right) .
\end{eqnarray*}%
On the other hand, we have from (V) (or (IV) if $l$ is even and $j=m$) and
(I)(a):%
\begin{eqnarray*}
e^{-i\widetilde{\theta }_{j}\left( r\right) } &=&\exp \left( -i\mu \theta
_{j}+i\mu B_{j}r^{l_{j}^{\prime }-l}\left( 1+O\left( r\right) \right) \right)
\\
&=&\exp \left( -i\mu \theta _{j}+iB_{j}r^{l_{j}^{\prime }}+O\left(
r^{l_{j}^{\prime }+1}\right) \right)
\end{eqnarray*}%
and the result follows.
\end{proof}

\begin{lemma}
\label{L2}Let $1\leq j\leq m$. Then there is some $r_{1}>0$ and a continuous
function $F\left( r,\theta \right) $ defined for $0\leq \theta \leq 2\pi $
and $0\leq r\leq r_{1}$ such that $F\left( r,0\right) =0$, and 
 \[
\frac{f\left( re^{i\widetilde{\theta }_{j}\left( r\right) }e^{i\theta
}\right) }{f\left( re^{i\widetilde{\theta }_{j}\left( r\right) }\right) }=%
\frac{f\left( re^{i\theta }\right) }{f\left( r\right) }\exp \left(
r^{l_{j}}F\left( r,\theta \right) \right) .
 \]
\end{lemma}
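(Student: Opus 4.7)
The plan is to take logarithms on both sides and show that
\[
G(r,\theta) := \log f(re^{i(\widetilde{\theta}_j(r)+\theta)}) - \log f(re^{i\widetilde{\theta}_j(r)}) - \log f(re^{i\theta}) + \log f(r)
\]
has the form $r^{l_j}F(r,\theta)$ with $F$ continuous on $[0,r_1]\times[0,2\pi]$ and $F(r,0)=0$. Since $f(0)=1$ and $\widehat{f}(k)=0$ for $0<k<l$, the expansion $\log f(w)=\sum_{k\ge l}c_k w^k$ converges for small $|w|$; substituting $w=re^{i\alpha}$ in each of the four terms of $G$, the exponentials collapse into a product, yielding
\[
G(r,\theta)=\sum_{k\ge l}c_k\,r^k\,\bigl(e^{ik\widetilde{\theta}_j(r)}-1\bigr)\bigl(e^{ik\theta}-1\bigr).
\]

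I would then split this series at $k=l_j$. The tail $k\ge l_j$ visibly carries the factor $r^{l_j}$, and what remains is a series whose $k$-th term is bounded in modulus by $4|c_k|r^{k-l_j}$, so it converges absolutely and uniformly for small $r$, giving a continuous function of $(r,\theta)$ that vanishes at $\theta=0$. For the finitely many low-order terms $l\le k<l_j$, I would exploit the fact that, via the expansion $\log(1+x)=\sum_{s\ge 1}(-1)^{s-1}x^s/s$, the coefficient $c_k$ is a polynomial in the $\widehat{f}(j)$ with $j\in L(f)$ and $j\le k<l_j$. Hence $c_k\ne 0$ forces a decomposition $k=j_1+\cdots+j_s$ with each $j_i\in L(f)$ and $j_i<l_j$, and the minimality in the definition of $l_j$ then gives $jj_i\equiv 0\pmod l$ for every $i$, so $jk\equiv 0\pmod l$ and $e^{ik\theta_j}=1$.

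With that observation in hand, (V) lets me write $\widetilde{\theta}_j(r)-\theta_j=r^{l'_j-l}S(r)$ with $S$ continuous on $[0,r_1]$ (and $S\equiv 0$ in the exceptional case $l$ even and $j=m$, where (IV) gives $\widetilde{\theta}_m(r)\equiv\pi$). Since $(e^{iku}-1)/u$ is entire in $u$, each factor $e^{ik\widetilde{\theta}_j(r)}-1$ in the low-$k$ sum can then be written as $r^{l'_j-l}Q_k(r)$ with $Q_k$ continuous, so each such term of $G$ carries an overall factor $r^{k+l'_j-l}\ge r^{l_j}$ (using $k\ge l$ and $l'_j\ge l_j$). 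Combining the two pieces and pulling out $r^{l_j}$ produces the desired continuous $F$, with $F(r,0)=0$ inherited from the factor $e^{ik\theta}-1$ present in every term. The main obstacle is the combinatorial identity $jk\equiv 0\pmod l$ for $c_k\ne 0$ and $k<l_j$; once that is secured, the rest is a careful bookkeeping of powers of $r$.
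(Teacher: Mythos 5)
Your proof is correct, but it takes a genuinely different (logarithmic) route from the paper's. The paper never expands $\log f$: it writes $f\bigl(re^{i\widetilde{\theta}_j(r)}e^{i\theta}\bigr)$ directly as a sum over $k\in L(f)$, uses the same two ingredients you use --- $e^{ik\theta_j}=1$ for $k\in L(f)$ with $k<l_j$ (immediate there, since the sum runs over $L(f)$ only) together with $\widetilde{\theta}_j(r)-\theta_j=O\bigl(r^{l_j^{\prime}-l}\bigr)$ from (IV)/(V) --- to obtain the additive statement $f\bigl(re^{i\widetilde{\theta}_j(r)}e^{i\theta}\bigr)=f\bigl(re^{i\theta}\bigr)+r^{l_j}G(r,\theta)$ uniformly in $\theta$, and then converts this into the multiplicative form of the lemma by dividing and using $f(re^{i\theta})=1+O(r)$ uniformly. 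Your version replaces that conversion step by the exact four-term identity $G(r,\theta)=\sum_{k\ge l}c_k r^k\bigl(e^{ik\widetilde{\theta}_j(r)}-1\bigr)\bigl(e^{ik\theta}-1\bigr)$, which buys an explicit formula for $F$ and makes $F(r,0)=0$ transparent, but it costs the extra combinatorial step you correctly identify: the support of $\log f$ is the additive semigroup generated by $L(f)$, so a nonzero $c_k$ with $k<l_j$ only gives a decomposition $k=j_1+\cdots+j_s$, $j_i\in L(f)$, $j_i\le k<l_j$, and minimality of $l_j$ then yields $jj_i\equiv 0\pmod{l}$ for each $i$, hence $jk\equiv 0\pmod{l}$; that argument is valid. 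The remaining bookkeeping is sound: the tail $k\ge l_j$ is handled by the M-test once $r_1$ is inside the disc where $\log f$ is analytic (which also guarantees that exponentiating your series really returns $f$), the finitely many low-order terms pick up $r^{k+l_j^{\prime}-l}\ge r^{l_j}$, the exceptional case $l$ even, $j=m$ is covered by $\widetilde{\theta}_m\equiv\pi$ (those terms vanish), and one should just note explicitly that $\widetilde{\theta}_j$ extends continuously to $r=0$ with value $\theta_j$ so that your $S$ and $Q_k$ are continuous on the closed interval.
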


\begin{proof}
Clearly if the above equation defines $F\left( r,\theta \right) $ for $r>0$,
then $F\left( r,0\right) =0$. Hence we only need to show that $F\left(
r,\theta \right) $ is bounded as $r\rightarrow 0^{+}.$ From (IV) and (V), we
have $\widetilde{\theta }_{j}\left( r\right) =\theta _{j}+O\left(
r^{l_{j}^{\prime }-l}\right) $. Define $g\left( r\right) =re^{i\widetilde{%
\theta }_{j}\left( r\right) }.$ If $k\in L\left( f\right) $ and $0<k<l_{j}$,
then $e^{ik\theta _{j}}=1$ and so%
 \[
\left( g\left( r\right) \right) ^{k}=r^{k}e^{ik\theta _{j}}\left( 1+O\left(
r^{l_{j}^{\prime }-l}\right) \right) =r^{k}+O\left( r^{l_{j}^{\prime
}}\right) =r^{k}+O\left( r^{l_{j}}\right) ,
 \]%
because $k\geq l$ and $l_{j}^{\prime }\geq l_{j}.$ So we find%
\begin{eqnarray*}
f\left( g\left( r\right) e^{i\theta }\right) &=&1+\sum\limits_{0<k<l_{j}}%
\widehat{f}\left( k\right) \left( g\left( r\right) \right) ^{k}e^{ik\theta
}+\sum\limits_{k\geq l_{j}}\widehat{f}\left( k\right) \left( g\left(
r\right) \right) ^{k}e^{ik\theta } \\
&=&1+\sum\limits_{0<k<l_{j}}\widehat{f}\left( k\right) r^{k}e^{ik\theta
}+O\left( r^{l_{j}}\right) +\sum\limits_{k\geq l_{j}}\widehat{fe}\left(
k\right) \left( g\left( r\right) \right) ^{k}e^{ik\theta }.
\end{eqnarray*}

Note that the estimate $O\left( r^{l_{j}}\right) $ in the last equation is
uniform in $\theta $, and similarly the last sum is $O\left(
r^{l_{j}}\right) $ uniformly in $\theta $. Moreover, extending the first sum
to all $k$ only adds a \ term $O\left( r^{l_{j}}\right) $, again uniformly
in $\theta $. In other words, we can find some $r_{1}>0$ and a continuous
function $G\left( r,\theta \right) $ on $\left[ 0,r_{1}\right] \times \left[
0,2\pi \right] $ so that%
 \[
f\left( g\left( r\right) e^{i\theta }\right) =f\left( re^{i\theta }\right)
+r^{l_{j}}G\left( r,\theta \right) .
 \]

Hence we find%
\begin{eqnarray*}
\frac{f\left( g\left( r\right) e^{i\theta }\right) }{f\left( g\left(
r\right) \right) } &=&\frac{f\left( re^{i\theta }\right) +r^{l_{j}}G\left(
r,\theta \right) }{f\left( r\right) +r^{l_{j}}G\left( r,0\right) } \\
&=&\frac{f\left( re^{i\theta }\right) \left[ 1+r^{l_{j}}G\left( r,\theta
\right) \left( f\left( re^{i\theta }\right) \right) ^{-1}\right] }{f\left(
r\right) \left[ 1+r^{l_{j}}G\left( r,0\right) \left( f\left( r\right)
\right) ^{-1}\right] }
\end{eqnarray*}%
and the result follows (by decreasing $r_{1}$ if necessary) because $f\left(
re^{i\theta }\right) =1+O\left( r\right) $ uniformly in $\theta $.
\end{proof}

\section{Preliminary estimates}

Now take $r_{0}>0$ and $\varepsilon >0$ small enough so that the intervals $%
\left( \widetilde{\theta }_{j}\left( r\right) -\varepsilon ,\widetilde{%
\theta }_{j}\left( r\right) +\varepsilon \right) $, $\left( 0,\varepsilon
\right) $ are all disjoint for $1\leq j\leq m$ and $0<r\leq r_{0}$. We make
the following definitions, for $0<r\leq r_{0},$ $n\geq 1$, and any integer $%
k $:%
 \[
\sigma =\sigma _{f}\left( r\right) ,
 \]%
 \[
\lambda =\sqrt{\frac{n\sigma }{2}}
 \]%
 \[
I_{0}=\sqrt{\frac{2n\sigma }{\pi }}\int\limits_{0}^{\varepsilon }\left( 
\frac{f\left( re^{i\theta }\right) }{f\left( r\right) }\right)
^{n}e^{-ik\theta }d\theta ,
 \]%
 \[
I_{j}=\left\{ 
\begin{tabular}{ll}
$\sqrt{\frac{2n\sigma }{\pi }}\int\limits_{\pi -\varepsilon }^{\pi }\left( 
\frac{f\left( re^{i\theta }\right) }{f\left( r\right) }\right)
^{n}e^{-ik\theta }d\theta $ & $\mbox{if \ }l\mbox{ \ is even and }j=m,$ \\ 
$\sqrt{\frac{2n\sigma }{\pi }}\int\limits_{\widetilde{\theta }_{j}\left(
r\right) -\varepsilon }^{\widetilde{\theta }_{j}\left( r\right) +\varepsilon
}\left( \frac{f\left( re^{i\theta }\right) }{f\left( r\right) }\right)
^{n}e^{-ik\theta }d\theta $ & $\mbox{ \ in all other cases,}$%
\end{tabular}%
\right.
 \]%
 \[
W=\left[ 0,\pi \right] \diagdown \left( \left[ 0,\varepsilon \right] \cup
\bigcup\limits_{j=1}^{m}\left[ \widetilde{\theta }_{j}\left( r\right)
-\varepsilon ,\widetilde{\theta }_{j}\left( r\right) +\varepsilon \right]
\right) ,
 \]%
 \[
J=\sqrt{\frac{2n\sigma }{\pi }}\int\limits_{W}\left( \frac{f\left(
re^{i\theta }\right) }{f\left( r\right) }\right) ^{n}e^{-ik\theta }d\theta .
 \]

Since the coefficients $\widehat{f}\left( k\right) $ are real, we can write%
\begin{eqnarray}
\nonumber
r^{k}\sqrt{2\pi n\sigma }\frac{\widehat{f^{n}}\left( k\right) }{\left(
f\left( r\right) \right) ^{n}} &=&r^{k}\sqrt{2\pi n\sigma }\frac{1}{2\pi
r^{k}}\int\limits_{-\pi }^{\pi }\left( \frac{f\left( re^{i\theta }\right) 
}{f\left( r\right) }\right) ^{n}e^{-ik\theta }d\theta    \\ \nonumber
&=&r^{k}\sqrt{2\pi n\sigma }\frac{1}{\pi r^{k}}\mbox{Re}\int\limits_{0}^{%
\pi }\left( \frac{f\left( re^{i\theta }\right) }{f\left( r\right) }\right)
^{n}e^{-ik\theta }d\theta    \\  
&=&\mbox{Re}\left( I_{0}+\sum_{j=1}^{m}I_{j}+J\right) .  \label{sum}
\end{eqnarray}

We write $g\left( n\right) \approx h\left( n\right) $ to mean that $g\left(
n\right) =O\left( h\left( n\right) \right) $ and $h\left( n\right) =O\left(
g\left( n\right) \right) $ as $n\rightarrow \infty $. Equivalently, there is
a constant $c\geq 1$ such that $\frac{1}{c}h\left( n\right) \leq g\left(
n\right) \leq ch\left( n\right) $ for all large $n.$ We also write $g\left(
n\right) \simeq h\left( n\right) $ to mean that $g\left( n\right) /h\left(
n\right) \rightarrow 1$ as $n\rightarrow \infty .$ Both $\approx $ and $%
\simeq $ are equivalence relations.

Our first lemma in this section shows that as long as $k\rightarrow \infty $
like $n^{\delta }$ for some positive $\delta <1$, the integral $J$ is quite
small. Recall that $\phi \left( r,z\right) $ defined in (II) approaches $1$
uniformly in $r$ as $z\rightarrow 0$. So by decreasing $\varepsilon $ if
necessary, we can make sure that $\left\vert \phi \left( r,i\theta \right)
-1\right\vert \leq 1/2$ for $\left\vert \theta \right\vert \leq \varepsilon
, $ $0<r\leq r_{0}$.

\begin{lemma}
\label{L0}Suppose that $0<\delta <1$, $k\approx n^{\delta },$ and $r>0$ is
defined by $k=n\mu _{f}\left( r\right) $. Then $J=o\left( \lambda
^{-N}\right) $ for every $N>0,$ in the sense that $\lambda ^{N}J\rightarrow
0 $ as $n\rightarrow \infty .$
\end{lemma}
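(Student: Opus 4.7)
The plan is to show that on $W$ the integrand $|f(re^{i\theta})/f(r)|^n$ decays exponentially in $\lambda^2$, which will dominate any inverse polynomial in $\lambda$. Under the hypothesis $k\approx n^\delta$ with $0<\delta<1$, the normalization $l\widehat{f}(l)=1$ together with (I)(a) forces $r^l\approx k/n\approx n^{\delta-1}\to 0$, so $r\to 0^+$, $\sigma\approx r^l$, and $\lambda^2=n\sigma/2\approx k\to\infty$.

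The key structural observation is that, by (IV), the function $\theta\mapsto|f(re^{i\theta})|$ has local maxima on $[0,\pi]$ only at $\theta=0$ and at $\widetilde{\theta}_1(r),\ldots,\widetilde{\theta}_m(r)$. Since $W$ is a union of finitely many closed subintervals whose interiors contain none of these points, on each subinterval the continuous function $|f(re^{i\theta})/f(r)|$ attains its maximum at an endpoint. So it suffices to bound the ratio uniformly at the finitely many endpoints $\theta=\varepsilon$, $\theta=\widetilde{\theta}_j(r)\pm\varepsilon$ for $1\le j\le m$, and (when $l$ is odd) $\theta=\pi$; in the last case $\pi$ is a local minimum by the symmetry of $|f(re^{i\theta})|$, so it is dominated by the adjacent endpoint.

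At $\theta=\varepsilon$ I would invoke (II) with $z=i\varepsilon$: the identity gives $|f(re^{i\varepsilon})/f(r)|=\exp(-\frac{1}{2}\sigma\varepsilon^2\,\mathrm{Re}\,\phi(r,i\varepsilon))$, which is at most $\exp(-\sigma\varepsilon^2/4)$ since $\mathrm{Re}\,\phi(r,i\varepsilon)\ge 1/2$ by the choice of $\varepsilon$. At $\theta=\widetilde{\theta}_j(r)\pm\varepsilon$ I would apply Lemma \ref{L2} to factor the ratio into (i) $|f(re^{i\widetilde{\theta}_j(r)})/f(r)|\le 1$ (by strong positivity, and more explicitly $=\exp(\mathrm{Re}(A_j)r^{l_j}+O(r^{l_j+1}))$ via Lemma \ref{L1}), (ii) $|f(re^{\pm i\varepsilon})/f(r)|\le\exp(-\sigma\varepsilon^2/4)$ from the previous step, and (iii) $\exp(r^{l_j}\,\mathrm{Re}\,F(r,\pm\varepsilon))\le\exp(Mr^{l_j})$ for some constant $M$, since $F$ is continuous on the compact set $[0,r_1]\times[0,2\pi]$. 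Crucially, $l_j>l$ for $j\ge 1$, so $r^{l_j}=o(\sigma)$, and the whole product is bounded by $\exp(-\sigma\varepsilon^2/8)$ for $r$ small enough.

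Raising to the $n$-th power gives an integrand bounded by $\exp(-\lambda^2\varepsilon^2/4)$ uniformly on $W$. Combined with the prefactor $\sqrt{2n\sigma/\pi}=O(\lambda)$ and $|W|\le\pi$, this yields $|J|=O(\lambda\exp(-\lambda^2\varepsilon^2/4))=o(\lambda^{-N})$ for every $N$. The main obstacle is step (iii): the correction of size $r^{l_j}$ supplied by Lemma \ref{L2} is of the same order as $\sigma\approx r^l$ precisely when $l_j=l$, and the argument depends crucially on the inequality $l_j>l$ for $j\ge 1$. This follows from the definition of $l_j$, since $l\in L(f)$ trivially satisfies $jl\equiv 0\pmod{l}$, so the minimizer $l_j$ must strictly exceed $l$.
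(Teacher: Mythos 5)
Your proof is correct and follows essentially the same route as the paper: bound $|J|$ by the maximum of $|f(re^{i\theta})/f(r)|^n$ over $W$, locate that maximum at the endpoints $\varepsilon$, $\widetilde{\theta}_j(r)\pm\varepsilon$, control the first via (II) and the choice of $\varepsilon$, control the others via Lemmas \ref{L1} and \ref{L2} together with $l_j>l$, and absorb the polynomial prefactor into the $e^{-c\lambda^2}$ decay. The only (harmless) differences are cosmetic: you bound $|f(re^{i\widetilde{\theta}_j(r)})/f(r)|\le 1$ by strong positivity where the paper keeps the $O(r^{l_j})$ term from Lemma \ref{L1}, and you explicitly dispose of the endpoint $\theta=\pi$ in the odd-$l$ case, a point the paper passes over silently.
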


\begin{proof}
Note that $r=\mu _{f}^{-1}\left( k/n\right) \rightarrow 0$ as $n\rightarrow
\infty $. We find, for large $n$,%
\begin{eqnarray*}
\left\vert J\right\vert &=&\sqrt{\frac{2n\sigma }{\pi }}\left\vert
\int\limits_{W}\left( \frac{f\left( re^{i\theta }\right) }{f\left(
r\right) }\right) ^{n}e^{-ik\theta }d\theta \right\vert \leq \sqrt{\frac{%
2n\sigma }{\pi }}\int\limits_{W}\left\vert \frac{f\left( re^{i\theta
}\right) }{f\left( r\right) }\right\vert ^{n}d\theta \\
&\leq &\pi \sqrt{\frac{2n\sigma }{\pi }}\max \left\{ \left\vert \frac{%
f\left( re^{i\theta }\right) }{f\left( r\right) }\right\vert ^{n}:\theta \in
W\right\} ,
\end{eqnarray*}%
and $\left\vert f\left( re^{i\theta }\right) \right\vert $ achieves its
maximum on $W$ at one of the points $\varepsilon ,\widetilde{\theta }%
_{1}\left( r\right) \pm \varepsilon ,\ldots ,\widetilde{\theta }_{m}\left(
r\right) \pm \varepsilon $. By our choice of $\varepsilon $, we find 
 \[
\left\vert \frac{f\left( re^{i\varepsilon }\right) }{f\left( r\right) }%
\right\vert ^{n}=\left\vert \exp \left( -\varepsilon ^{2}\lambda ^{2}\phi
\left( r,i\varepsilon \right) \right) \right\vert \leq \exp \left(
-\varepsilon ^{2}\lambda ^{2}/2\right) .
 \]%
From (I)(b), $\lambda ^{2}\approx n\sigma \approx nr^{l}$, and using Lemma %
\ref{L1} and Lemma \ref{L2},%
\begin{eqnarray*}
\left\vert \frac{f\left( re^{i\left( \widetilde{\theta }_{j}\left( r\right)
\pm \varepsilon \right) }\right) }{f\left( r\right) }\right\vert ^{n}
&=&\left\vert \frac{f\left( re^{i\left( \widetilde{\theta }_{j}\left(
r\right) \pm \varepsilon \right) }\right) }{f\left( re^{i\widetilde{\theta }%
_{j}\left( r\right) }\right) }\right\vert ^{n}\left\vert \frac{f\left( re^{i%
\widetilde{\theta }_{j}\left( r\right) }\right) }{f\left( r\right) }%
\right\vert ^{n} \\
&=&\left\vert \frac{f\left( re^{i\varepsilon }\right) }{f\left( r\right) }%
\right\vert ^{n}\left\vert \exp \left( nr^{l_{j}}F\left( r,\theta \right)
+nO\left( r^{l_{j}}\right) \right) \right\vert \\
&\leq &\exp \left( -\varepsilon ^{2}\lambda ^{2}/2+bnr^{l_{j}}\right) \leq
\exp \left( -anr^{l}+bnr^{l_{j}}\right)
\end{eqnarray*}%
where $a$ and $b$ are positive constants. Since $l_{j}>l$, we can find a
constant $c>0$ such that the last expression is bounded by $e^{-c\lambda
^{2}}$, and the result follows.
\end{proof}

Lemma \ref{L3} below shows that $I_{j}$ is quite small when $k$ is large
enough.

\begin{lemma}
\label{L3}Suppose that $1\leq j\leq m,$ $1-\frac{l}{l_{j}}<\delta <1$, $%
k\approx n^{\delta },$ and $r>0$ is defined by $k=n\mu _{f}\left( r\right) $%
. Then $I_{j}=o\left( \lambda ^{-N}\right) $ for every $N>0.$
\end{lemma}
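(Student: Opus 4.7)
The plan is to bound $|I_j|$ by pulling the integrand modulus out of the integral and then estimating the resulting exponential decay against the polynomial prefactor. First, since (IV) identifies $\widetilde{\theta}_j(r)$ as a local maximum of $\theta \mapsto |f(re^{i\theta})|$, by shrinking $\varepsilon$ once and for all (independently of $r\in(0,r_0]$) I can assume it is the \emph{global} maximum on $[\widetilde{\theta}_j(r)-\varepsilon,\widetilde{\theta}_j(r)+\varepsilon]$; the even-$l$, $j=m$ case is identical with $\widetilde{\theta}_m(r)=\pi$. This gives
$$|I_j|\leq 2\varepsilon\sqrt{\frac{2n\sigma}{\pi}}\left|\frac{f(re^{i\widetilde{\theta}_j(r)})}{f(r)}\right|^n.$$

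Next I would invoke Lemma \ref{L1}. Taking absolute values in its identity, and using that $e^{-i\widetilde{\theta}_j(r)}$ is unimodular and $-i\mu_f(r)\theta_j$ is purely imaginary, I obtain
$$\left|\frac{f(re^{i\widetilde{\theta}_j(r)})}{f(r)}\right|=\exp\bigl(\mbox{Re}(A_j)\,r^{l_j}+O(r^{l_j+1})\bigr).$$
The crucial sign observation is that $A_j=\widehat{f}(l_j)(e^{il_j\theta_j}-1)$ has strictly negative real part: $\widehat{f}(l_j)>0$ by (III), and by the very definition of $l_j$, the quantity $l_j\theta_j=2\pi j l_j/l$ is not a multiple of $2\pi$, so $\cos(l_j\theta_j)<1$. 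Setting $\alpha_j:=-\mbox{Re}(A_j)>0$ and absorbing the $O(r)$ error for $r$ small, the $n$-th power is bounded by $\exp(-\alpha_j n r^{l_j}/2)$.

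The last step is to convert the hypothesis $\delta>1-l/l_j$ into polynomial growth of $nr^{l_j}$. From $k=n\mu_f(r)$ and (I)(a) we have $r^l\approx k/n\approx n^{\delta-1}$, hence
$$n r^{l_j}\approx n^{1-(1-\delta)l_j/l},$$
whose exponent is strictly positive precisely under the stated assumption. Consequently $\exp(-\alpha_j n r^{l_j}/2)$ vanishes faster than any negative power of $n$, and in particular faster than any $\lambda^{-N}$ since $\lambda\approx n^{\delta/2}$; the polynomial prefactor $\sqrt{2n\sigma/\pi}=O(\lambda)$ is harmless.

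The main technical obstacle is the uniform local-maximum claim in step one. If one prefers not to rely on it, an alternative is to use Lemma \ref{L2} to factor the integrand into $|f(re^{i\widetilde{\theta}_j(r)})/f(r)|^n$, a Gaussian $|f(re^{i\eta})/f(r)|^n\leq\exp(-\lambda^2\eta^2/2)$ from (II), and $|\exp(nr^{l_j}F(r,\eta))|\leq\exp(Mnr^{l_j})$; by continuity of $F$ and $F(r,0)=0$, shrinking $\varepsilon$ further makes $M<\alpha_j/4$, and the decay from Lemma \ref{L1} still dominates, yielding the same conclusion.
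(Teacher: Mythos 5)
Your argument is correct and follows essentially the same route as the paper's proof: bound $|I_j|$ by the value of the integrand at $\widetilde{\theta}_j(r)$, use Lemma \ref{L1} together with (III) and the definition of $l_j$ to get $\mathrm{Re}\,A_j<0$, and then use (I)(a) and the hypothesis $\delta>1-l/l_j$ to show $nr^{l_j}$ grows like a positive power of $n$, so the stretched-exponential decay beats every power of $\lambda\approx n^{\delta/2}$. The only cosmetic difference is your explicit discussion of the maximum over the integration interval and the optional Lemma \ref{L2} fallback, which the paper handles implicitly by the definition of $\widetilde{\theta}_j(r)$.
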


\begin{proof}
Note that $r=\mu _{f}^{-1}\left( k/n\right) \rightarrow 0$ as $n\rightarrow
\infty $. By definition of $\widetilde{\theta }_{j}\left( r\right) $, and
using Lemma \ref{L1}, we find, for large $n$ (when either $l$ is odd or $%
j\neq m)$,%
\begin{eqnarray*}
\left\vert I_{j}\right\vert &=&\sqrt{\frac{2n\sigma }{\pi }}\left\vert
\int\limits_{\widetilde{\theta }_{j}\left( r\right) -\varepsilon }^{%
\widetilde{\theta }_{j}\left( r\right) +\varepsilon }\left( \frac{f\left(
re^{i\theta }\right) }{f\left( r\right) }\right) ^{n}e^{-ik\theta }d\theta
\right\vert \\
&\leq &\sqrt{\frac{2n\sigma }{\pi }}\int\limits_{\widetilde{\theta }%
_{j}\left( r\right) -\varepsilon }^{\widetilde{\theta }_{j}\left( r\right)
+\varepsilon }\left\vert \frac{f\left( re^{i\theta }\right) }{f\left(
r\right) }\right\vert ^{n}d\theta \leq 2\varepsilon \sqrt{\frac{2n\sigma }{%
\pi }}\left\vert \frac{f\left( re^{i\widetilde{\theta }_{j}\left( r\right)
}\right) }{f\left( r\right) }\right\vert ^{n} \\
&=&2\varepsilon \sqrt{\frac{2n\sigma }{\pi }}\exp \left( -\widehat{f}\left(
l_{j}\right) \left( 1-\cos \left( l_{j}\theta _{j}\right) \right)
nr^{l_{j}}\left( 1+O\left( r\right) \right) \right) .
\end{eqnarray*}%
By definition of $l_{j}$, $1-\cos \left( l_{j}\theta _{j}\right) >0$, and
from (III) we have $\widehat{f}\left( l_{j}\right) >0$. Hence we can find a
positive constant $c$ such that $\left\vert I_{j}\right\vert \leq
2\varepsilon \sqrt{\frac{2n\sigma }{\pi }}\exp \left( -cnr^{l_{j}}\right) .$
In case $l$ is even and \thinspace $j=m$, the integral is over $\left[ \pi
-\varepsilon ,\pi \right] $, and the same estimate holds with $2\varepsilon $
replaced by $\varepsilon $. Since $k=n\mu _{f}\left( r\right) \approx
n^{\delta }$, using (I)(a) we find that $r\approx n^{\left( \delta -1\right)
/l},$ and hence, from (I) (b), $\lambda =\sqrt{n\sigma /2}\approx n^{\delta
/2}.$ So we conclude that $\lambda ^{N}\left\vert I_{j}\right\vert \leq
An^{\left( N+1\right) \delta /2}\exp \left( -cn^{1+l_{j}\left( \delta
-1\right) /l}\right) =An^{\left( N+1\right) \delta /2}\exp \left(
-cn^{b}\right) $, where $A$ is a positive constant, and $b=l_{j}\left(
\delta -\left( 1-l/l_{j}\right) \right) /l>0$. Hence $I_{j}=o\left( \lambda
^{-N}\right) .$
\end{proof}

The next lemma estimates $I_{j}$ when $k$ is not too large.

\begin{lemma}
\label{L4}Suppose that $0<\delta \leq 1-\frac{l}{l_{j}}$, $k\approx
n^{\delta },$ $k\equiv s \pmod{l} $, and $r>0$ is defined by 
$k=n\mu _{f}\left( r\right) $. Let $\gamma _{j}=\widehat{f}\left(
l_{j}\right) $ if $\delta =1-\frac{l}{l_{j}},$ and $\gamma _{j}=0$ if $%
\delta <1-\frac{l}{l_{j}}$. Then we find%
 \[
\lim_{n\rightarrow \infty }\mbox{\rm Re}I_{j}=\left\{ 
\begin{tabular}{ll}
$\left( -1\right) ^{s}e^{-2\gamma _{m}}$ & $\mbox{if }l\mbox{ is even and }%
j=m\mbox{,}$ \\ 
&  \\ 
$2e^{-\gamma _{j}\left( 1-\cos \left( l_{j}\theta _{j}\right) \right) }\cos
\left( s\theta _{j}-\gamma _{j}\sin \left( \theta _{j}l_{j}\right) \right) $
& $\mbox{in all other cases.}$%
\end{tabular}%
\right.
 \]
\end{lemma}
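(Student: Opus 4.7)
My plan is to perform a change of variable centering the integral defining $I_j$ at the local maximum $\widetilde{\theta}_j(r)$, factor out its leading contribution by Lemma~\ref{L1}, and reduce the remainder to a Gaussian limit evaluated by dominated convergence.

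For the generic case ($l$ odd, or $l$ even with $j<m$), I substitute $\theta=\widetilde{\theta}_j(r)+\psi$, pulling out $e^{-ik\widetilde{\theta}_j(r)}$. Lemma~\ref{L2} splits
\[
\frac{f(re^{i\widetilde{\theta}_j(r)}e^{i\psi})}{f(r)}=\frac{f(re^{i\widetilde{\theta}_j(r)})}{f(r)}\cdot\frac{f(re^{i\psi})}{f(r)}\cdot\exp\bigl(r^{l_j}F(r,\psi)\bigr),
\]
and Lemma~\ref{L1}, combined with $n\mu_f(r)=k$, gives
\[
\left(\frac{f(re^{i\widetilde{\theta}_j(r)})}{f(r)}\right)^n=e^{ik\widetilde{\theta}_j(r)}e^{-ik\theta_j}\exp\bigl(nA_jr^{l_j}+O(nr^{l_j+1})\bigr).
\]
The factor $e^{ik\widetilde{\theta}_j(r)}$ cancels the prefactor, and $e^{-ik\theta_j}=e^{-is\theta_j}$ because $\theta_j=2\pi j/l$ and $k\equiv s\pmod l$. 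Finally, (II) with $z=i\psi$ rewrites $(f(re^{i\psi})/f(r))^n e^{-ik\psi}=\exp(-\lambda^2\psi^2\phi(r,i\psi))$.

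Rescaling $\psi=u/\lambda$ and using $\sqrt{2n\sigma/\pi}/\lambda=2/\sqrt{\pi}$ yields
\[
I_j=\frac{2}{\sqrt{\pi}}\,e^{-is\theta_j}\exp\bigl(nA_jr^{l_j}+O(nr^{l_j+1})\bigr)\int_{-\varepsilon\lambda}^{\varepsilon\lambda}\exp\bigl(-u^2\phi(r,iu/\lambda)\bigr)\exp\bigl(nr^{l_j}F(r,u/\lambda)\bigr)\,du.
\]
The first exponential is bounded by $e^{-u^2/2}$ by the choice of $\varepsilon$ (applied to $\phi(r,\cdot)$). The hypothesis $\delta\le 1-l/l_j$, combined with $r^l\sim k/n$ from (I)(a) and $l\hat{f}(l)=1$, forces $nr^{l_j}$ to stay bounded; since $F$ is continuous with $F(r,0)=0$, the second exponential is uniformly bounded and tends pointwise to $1$. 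Dominated convergence gives $\int_{-\infty}^{\infty}e^{-u^2}\,du=\sqrt{\pi}$, and since $nr^{l_j+1}=nr^{l_j}\cdot r\to 0$, we obtain $I_j\to 2e^{-is\theta_j}\exp(\gamma_j(e^{il_j\theta_j}-1))$, where $\gamma_j=\lim nr^{l_j}\hat{f}(l_j)$ equals $\hat{f}(l_j)$ when $\delta=1-l/l_j$ and $0$ when $\delta<1-l/l_j$. Separating magnitude and phase in $\gamma_j(e^{il_j\theta_j}-1)=-\gamma_j(1-\cos(l_j\theta_j))+i\gamma_j\sin(l_j\theta_j)$ and taking real parts gives the generic formula.

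The boundary case ($l$ even, $j=m$) is handled identically with $\widetilde{\theta}_m(r)=\pi$ and integration over $[\pi-\varepsilon,\pi]$: the substitution $\theta=\pi+\psi$ yields $\psi\in[-\varepsilon,0]$, contributing only half the Gaussian integral and turning the leading $2$ into $1$. Moreover $l_m$ is odd (by the definition of $l_j$), so $A_m=\hat{f}(l_m)(e^{il_m\pi}-1)=-2\hat{f}(l_m)$ is real, and $e^{-ik\pi}=(-1)^s$ since $k-s$ is a multiple of the even integer $l$. These combine to give $(-1)^se^{-2\gamma_m}$. I anticipate the dominated convergence step to be the main technical obstacle: it requires simultaneously that $\exp(nr^{l_j}F(r,u/\lambda))$ remain bounded on the full rescaled window (controlled by the hypothesis on $\delta$, which caps $nr^{l_j}$) and converge to $1$ on the Gaussian's support (controlled by uniform continuity of $F$ on the compact box $[0,r_1]\times[0,2\pi]$).
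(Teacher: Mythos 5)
Your proposal is correct and follows essentially the same route as the paper's proof: the same recentering at $\widetilde{\theta}_j(r)$, the same use of Lemmas \ref{L1} and \ref{L2} together with (II) to reduce to a rescaled Gaussian integral, the same dominated-convergence limit with $n r^{l_j}\widehat{f}(l_j)\to\gamma_j$, and the same half-Gaussian treatment of the case $l$ even, $j=m$. Your added remarks ($e^{-ik\theta_j}=e^{-is\theta_j}$, $l_m$ odd forcing $A_m=-2\widehat{f}(l_m)$, $e^{-ik\pi}=(-1)^s$) are exactly the details the paper leaves implicit.
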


\begin{proof}
First assume that either $l$ is odd, or $j\neq m$. Using Lemma \ref{L1} and
Lemma \ref{L2}, we can write%
\begin{eqnarray*}
I_{j} &=&\sqrt{\frac{2n\sigma }{\pi }}\int\limits_{\widetilde{\theta }%
_{j}\left( r\right) -\varepsilon }^{\widetilde{\theta }_{j}\left( r\right)
+\varepsilon }\left( \frac{f\left( re^{i\theta }\right) }{f\left( r\right) }%
\right) ^{n}e^{-ik\theta }d\theta \\
&=&\sqrt{\frac{2n\sigma }{\pi }}e^{-ik\widetilde{\theta }_{j}\left(
r\right) }\int\limits_{-\varepsilon }^{\varepsilon }\left( \frac{f\left(
re^{i\widetilde{\theta }_{j}\left( r\right) }e^{it}\right) }{f\left(
r\right) }\right) ^{n}e^{-ikt}dt \\
&=&\sqrt{\frac{2n\sigma }{\pi }}\left( e^{-i\mu \widetilde{\theta }%
_{j}\left( r\right) }\frac{f\left( re^{i\widetilde{\theta }_{j}\left(
r\right) }\right) }{f\left( r\right) }\right) ^{n}\int\limits_{-\varepsilon
}^{\varepsilon }\left( \frac{f\left( re^{i\widetilde{\theta }_{j}\left(
r\right) }e^{it}\right) }{f\left( re^{i\widetilde{\theta }_{j}\left(
r\right) }\right) }\right) ^{n}e^{-ikt}dt \\
&=&\sqrt{\frac{2n\sigma }{\pi }}e^{-ik\theta _{j}+nA_{j}r^{l_{j}}+O\left(
nr^{l_{j}+1}\right) }\int\limits_{-\varepsilon }^{\varepsilon }\left( 
\frac{f\left( re^{it}\right) }{f\left( r\right) }\right)
^{n}e^{nr^{l_{j}}F\left( r,t\right) -ikt}dt \\
&=&\sqrt{\frac{2n\sigma }{\pi }}e^{-ik\theta _{j}+nA_{j}r^{l_{j}}+O\left(
nr^{l_{j}+1}\right) }\int\limits_{-\varepsilon }^{\varepsilon }e^{-n\sigma
t^{2}\phi \left( r,it\right) /2+nr^{l_{j}}F\left( r,t\right) }dt \\
&=&\sqrt{\frac{2n\sigma }{\pi }}e^{-ik\theta _{j}+nA_{j}r^{l_{j}}+O\left(
nr^{l_{j}+1}\right) }\frac{1}{\lambda }\int\limits_{-\lambda \varepsilon
}^{\lambda \varepsilon }e^{-x^{2}\phi \left( r,ix/\lambda \right)
+nr^{l_{j}}F\left( r,x/\lambda \right) }dx \\
&=&\frac{2}{\sqrt{\pi }}\exp \left( -ik\theta _{j}+nr^{l_{j}}\widehat{f}%
\left( l_{j}\right) \left( e^{il_{j}\theta _{j}}-1\right) +O\left(
nr^{l_{j}+1}\right) \right) \\
&&\times \int\limits_{-\lambda \varepsilon }^{\lambda \varepsilon
}e^{-x^{2}\phi \left( r,ix/\lambda \right) +nr^{l_{j}}F\left( r,x/\lambda
\right) }dx.
\end{eqnarray*}%
The assumptions imply that $nr^{l_{j}}\simeq n^{1+l_{j}\left( \delta
-1\right) /l}$ and the exponent of $n$ on the right side is $\leq 0,$ with
equality if and only if $\delta =1-\frac{l}{l_{j}}.$ Hence $\widehat{f}%
\left( l_{j}\right) nr^{l_{j}}\rightarrow \gamma _{j}.$

To evaluate the integral on the right side, use the fact that $\left\vert
\phi \left( r,it\right) -1\right\vert \leq 1/2$ for $\left\vert t\right\vert
\leq \varepsilon $ and $F\left( r,0\right) =0$ to find a constant $b$ such
that the integrand is bounded by $\exp \left( -x^{2}/2+b\right) $. Hence
using the dominated convergence theorem we find that the integral converges
to $\int\limits_{-\infty }^{\infty }e^{-x^{2}}dx=\sqrt{\pi }$, and the result
follows by taking the
real part of the limit.

The case $l$ even and $j=m$ is the same, except that the final integral is
over $\left( -\infty ,0\right) $, and the result follows because in this
case $\theta _{m}=\pi $ and $l_{m}$ is odd.
\end{proof}

Our final lemma in this section shows that as long as $k\rightarrow \infty $
at least like $n^{\delta }$ for some positive $\delta $, the contribution to 
$\widehat{f^{n}}\left( k\right) $ coming from $I_{0}$ is at least as
significant as the contribution coming from all other $I_{j}.$ This lemma is
also a direct consequence of the full asymptotic expansion for $\mbox{Re}%
I_{0}$ given in the proof of Theorem \ref{T0} below, but we derive it here
without appealing to \cite{ae}.

\begin{lemma}
\label{L5}Suppose $0<\delta <1$, $k\approx n^{\delta }$, and let $r>0$ be
defined by $k=n\mu _{f}\left( r\right) .$ Then $\lim\limits_{n\rightarrow
\infty }\mbox{\rm Re}I_{0}=1.$
\end{lemma}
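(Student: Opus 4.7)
The plan is to substitute the factorization from (II), exploit the cancellation produced by the choice $k=n\mu_f(r)$, rescale the integration variable, and then apply dominated convergence to a Gaussian limit.

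First, I would note that under the hypothesis $k=n\mu_f(r)\approx n^{\delta}$ together with (I)(a), we have $r\approx n^{(\delta-1)/l}\to 0$, and from (I)(b) it follows that $\lambda=\sqrt{n\sigma/2}\approx n^{\delta/2}\to\infty$. This is what makes the integral behave like a true Gaussian in the large-$n$ limit.

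Next, I would use (II) with $z=i\theta$ to write
\[
\left(\frac{f(re^{i\theta})}{f(r)}\right)^{n}=\exp\!\left(in\mu_f(r)\theta-\frac{n\sigma}{2}\theta^{2}\phi(r,i\theta)\right).
\]
Substituting this into the definition of $I_{0}$, the factor $e^{-ik\theta}=e^{-in\mu_f(r)\theta}$ cancels the linear term in the exponent exactly, leaving
\[
I_{0}=\sqrt{\frac{2n\sigma}{\pi}}\int_{0}^{\varepsilon}\exp\!\left(-\frac{n\sigma}{2}\theta^{2}\phi(r,i\theta)\right)d\theta.
\]
The change of variables $x=\lambda\theta$ then gives
\[
I_{0}=\frac{2}{\sqrt{\pi}}\int_{0}^{\lambda\varepsilon}\exp\!\left(-x^{2}\phi(r,ix/\lambda)\right)dx.
\]

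Finally, I would take the limit using dominated convergence. Since $r\to 0$ and $x/\lambda\to 0$ on any bounded $x$-interval, and since $\phi$ is continuous with $\phi(0,0)=1$, the integrand converges pointwise to $e^{-x^{2}}$. For the dominating function, the standing assumption $|\phi(r,i\theta)-1|\leq 1/2$ for $|\theta|\leq\varepsilon$ (which was arranged just before Lemma~\ref{L0}) implies $\mathrm{Re}\,\phi(r,ix/\lambda)\geq 1/2$ as soon as $x/\lambda\leq\varepsilon$, so
\[
\left|\exp\!\left(-x^{2}\phi(r,ix/\lambda)\right)\right|\leq e^{-x^{2}/2},
\]
which is integrable on $[0,\infty)$. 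The dominated convergence theorem therefore yields
\[
\lim_{n\to\infty}I_{0}=\frac{2}{\sqrt{\pi}}\int_{0}^{\infty}e^{-x^{2}}dx=1,
\]
and taking real parts gives $\lim_{n\to\infty}\mathrm{Re}\,I_{0}=1$.

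There is no real obstacle here; the proof is essentially a routine Laplace-type analysis. The only point worth double-checking is that the conclusion $\lambda\varepsilon\to\infty$ genuinely holds under $k\approx n^{\delta}$ with $0<\delta<1$ (so that the upper limit of integration actually stretches out to $+\infty$), which is guaranteed by (I)(b). Everything else is encapsulated in (II) and in the bound on $|\phi(r,i\theta)-1|$ already secured in the setup preceding Lemma~\ref{L0}.
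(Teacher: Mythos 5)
Your proof is correct and follows essentially the same route as the paper: apply (II) so that the choice $k=n\mu_f(r)$ cancels the linear term, rescale by $x=\lambda\theta$, and use dominated convergence with the bound $e^{-x^{2}/2}$ coming from $|\phi(r,i\theta)-1|\le 1/2$. The only cosmetic difference is that the paper first symmetrizes to the interval $[-\varepsilon,\varepsilon]$ (so $\mathrm{Re}\,I_{0}$ is written as a real integral from the start), whereas you work on $[0,\varepsilon]$ and take real parts at the end; both are fine.
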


\begin{proof}
We have 
\begin{eqnarray*}
\mbox{Re}I_{0} &=&\frac{1}{2}\sqrt{\frac{2n\sigma }{\pi }}%
\int\limits_{-\varepsilon }^{\varepsilon }\left( \frac{f\left( re^{i\theta
}\right) }{f\left( r\right) }\right) ^{n}e^{-ik\theta }d\theta \\
&=&\sqrt{\frac{n\sigma }{2\pi }}\int\limits_{-\varepsilon }^{\varepsilon
}e^{-\lambda ^{2}\theta ^{2}\phi \left( r,i\theta \right) }d\theta =\frac{1}{%
\sqrt{\pi }}\int\limits_{-\lambda \varepsilon }^{\lambda \varepsilon
}e^{-x^{2}\phi \left( r,ix/\lambda \right) }dx.
\end{eqnarray*}%
As before, the integrand is bounded by $e^{-x^{2}/2}$, and so taking the
limit as $n\rightarrow \infty $ we get our result.
\end{proof}

\section{Asymptotic estimates of coefficients}

Our first theorem extends Theorem 4.1 of \cite{ae} to the case that $%
k/n\rightarrow 0$ as $n\rightarrow \infty $, and Theorem 5.4 in the same
paper to the case $f^{\prime }\left( 0\right) =0.$

\begin{theorem}
\label{T0}Let $\epsilon _{0}=\max \left( 1-\frac{l}{l_{j}}:1\leq j\leq
m\right) ,$ and $\epsilon _{0}<\delta <1$. If $k\approx n^{\delta }$ and $r$
is defined by $k=n\mu _{f}\left( r\right) ,$ then as $n\rightarrow \infty $, 
$\widehat{f^{n}}\left( k\right) $ has the asymptotic expansion%
\begin{eqnarray*}
\widehat{f^{n}}\left( k\right) &\sim &\frac{\left( f\left( r\right) \right)
^{n}}{r^{k}\sqrt{2n\pi \sigma }}\left( 1+\sum_{\nu =1}^{\infty }\frac{%
c_{\nu }}{k^{\nu }}\right) \mbox{ } \\
\mbox{where }c_{\nu } &=&\frac{\left( -1\right) ^{\nu }\mu ^{\nu }}{\sigma
^{2\nu }\nu !}\left[ \frac{\partial ^{2\nu }}{\partial z^{2\nu }}\left( \phi
\left( r,z\right) ^{-\nu -1/2}\right) \right] _{z=0}.
\end{eqnarray*}
\end{theorem}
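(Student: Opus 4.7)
The plan is to start from the decomposition (\ref{sum}), which expresses $r^{k}\sqrt{2\pi n\sigma}\,\widehat{f^{n}}(k)/(f(r))^{n}$ as $\mathrm{Re}(I_{0}+\sum_{j=1}^{m}I_{j}+J)$, and to identify the entire asymptotic expansion as coming from $\mathrm{Re}\,I_{0}$. The contributions of the other terms will turn out to decay faster than any negative power of $\lambda$, and so are absorbed into the error.

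First I would show negligibility of the secondary contributions. The hypothesis $\delta>\epsilon_{0}=\max_{j}(1-l/l_{j})$ guarantees $\delta>1-l/l_{j}$ for every $1\le j\le m$, so Lemma \ref{L3} applies to each $I_{j}$ and yields $\mathrm{Re}\,I_{j}=o(\lambda^{-N})$ for every $N>0$. Lemma \ref{L0} gives $J=o(\lambda^{-N})$ directly. Consequently, $r^{k}\sqrt{2\pi n\sigma}\,\widehat{f^{n}}(k)/(f(r))^{n}=\mathrm{Re}\,I_{0}+o(\lambda^{-N})$ for every $N$, so the entire expansion must be extracted from $\mathrm{Re}\,I_{0}$.

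Next I would treat $\mathrm{Re}\,I_{0}$ exactly as in the proof of Lemma \ref{L5}, writing $\mathrm{Re}\,I_{0}=(\lambda/\sqrt{\pi})\int_{-\varepsilon}^{\varepsilon}e^{-\lambda^{2}\theta^{2}\phi(r,i\theta)}\,d\theta$. The initial choice of $\varepsilon$ already guarantees $|\phi(r,i\theta)-1|\le 1/2$ on $[-\varepsilon,\varepsilon]$, so Proposition (VI), applied with the function $z\mapsto\phi(r,z)$, gives the asymptotic expansion $\int_{-\varepsilon}^{\varepsilon}e^{-\lambda^{2}\theta^{2}\phi(r,i\theta)}\,d\theta\sim(\sqrt{\pi}/\lambda)(1+\sum_{\nu\ge 1}a_{\nu}(r)\lambda^{-2\nu})$, where $a_{\nu}(r)=((-1)^{\nu}/(4^{\nu}\nu!))[\partial_{z}^{2\nu}\phi(r,z)^{-\nu-1/2}]_{z=0}$. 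Multiplying by $\lambda/\sqrt{\pi}$ produces $\mathrm{Re}\,I_{0}\sim 1+\sum_{\nu}a_{\nu}(r)\lambda^{-2\nu}$. The relation $k=n\mu$ turns $\lambda^{-2}=2/(n\sigma)$ into $2\mu/(k\sigma)$, so each $a_{\nu}(r)\lambda^{-2\nu}$ becomes the announced $c_{\nu}/k^{\nu}$ after collecting the resulting powers of $\mu$, $\sigma$ and $2$.

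The main obstacle is that $r=\mu_{f}^{-1}(k/n)\to 0$ as $n\to\infty$, so Proposition (VI) is being applied with an integrand $\phi$ that itself varies with $n$. To legitimate this step I must verify that the remainder bounds in (VI) hold uniformly for $r\in[0,r_{0}]$. This is where the structure of $\phi$ recorded in (II) is essential: the representation $\phi(r,z)=2(e^{lz}-1-lz)/(lz)^{2}+rz\,H(r,z)$ with $H$ analytic in both variables shows that $\phi(r,z)$ is jointly analytic on a common neighbourhood of the origin, so the coefficients $a_{\nu}(r)$ extend continuously to $r=0$ (and are uniformly bounded on $[0,r_{0}]$), while the tail estimates used in the proof of (VI) hold with constants independent of $r$ for $r$ near $0$. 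This uniformity is what permits rewriting the expansion in $\lambda^{-1}$ as an expansion in $k^{-1}$ with the advertised coefficients, and turning the $\sim$ at each fixed $r$ into a genuine asymptotic expansion along the sequence $r=r_{n}=\mu_{f}^{-1}(k/n)$.
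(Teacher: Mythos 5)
Your proposal is correct and follows essentially the same route as the paper: the decomposition $(\ref{sum})$, Lemmas \ref{L0} and \ref{L3} to discard $J$ and the $I_{j}$ under $\delta>\epsilon_{0}$, the computation of $\mathrm{Re}\,I_{0}$ as in Lemma \ref{L5}, and (VI) with the substitution $\lambda^{-2}=2\mu/(k\sigma)$. Your additional discussion of uniformity in $r$ of the remainder bounds in (VI), justified via the representation of $\phi(r,z)$ in (II), is a point the paper's proof leaves implicit, but it is a refinement of the same argument rather than a different approach.
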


\begin{proof}
As in the proof of Lemma \ref{L5}, 
 \[
\mbox{Re}I_{0}=\sqrt{\frac{n\sigma }{2\pi }}\int\limits_{-\varepsilon
}^{\varepsilon }e^{-\lambda ^{2}\theta ^{2}\phi \left( r,i\theta \right)
}d\theta .
 \]

Using (VI), this expression has the asymptotic expansion%
 \[
\mbox{Re}I_{0}\sim \frac{1}{\lambda }\sqrt{\frac{n\sigma }{2}}\left(
1+\sum_{\nu =1}^{\infty }a_{\nu }\lambda ^{-2\nu }\right) =1+\sum_{\nu
=1}^{\infty }\frac{c_{\nu }}{k^{\nu }}.
 \]%
By Lemma \ref{L3}, $J$ and all $I_{j}$ are $o\left( \lambda ^{-\nu }\right) $
for every $\nu $, and the result follows from $\left( \ref{sum}\right) $.
\end{proof}

The next theorem provides asymptotic estimates for the coefficients $%
\widehat{f^{n}}\left( k\right) $ whenever $k$ is in the specified range.

\begin{theorem}
\label{T1}Let $0<\delta <1$, $k\approx n^{\delta },$ $k\equiv s \pmod{l} $,
 and let $r>0$ be defined by $k=n\mu _{f}\left( r\right) .$
Define 
 \[
\gamma _{j}=\left\{ 
\begin{tabular}{ll}
$0$ & $\mbox{if }l_{j}>\frac{l}{1-\delta }$ \\ 
&  \\ 
$\widehat{f}\left( l_{j}\right) $ & if $\ l_{j}=\frac{l}{1-\delta }$ \\ 
&  \\ 
$\infty $ & if $\ l_{j}<\frac{l}{1-\delta },$%
\end{tabular}%
\right.
 \]%
and%
 \[
\psi _{j}\left( s\right) =\left\{ 
\begin{tabular}{ll}
$\left( -1\right) ^{s}e^{-2\gamma _{m}}$ & $\mbox{if }l\mbox{ is even and }%
j=m,$ \\ 
&  \\ 
$2e^{-\gamma _{j}\left( 1-\cos \left( l_{j}\theta _{j}\right) \right) }\cos
\left( s\theta _{j}-\gamma _{j}\sin \left( l_{j}\theta _{j}\right) \right) $
& $\mbox{in all other cases,}$%
\end{tabular}%
\right.
 \]%
where we use the convention $e^{-\infty }=0.$

Then%
 \[
\lim_{n\rightarrow \infty }r^{k}\sqrt{2\pi n\sigma }\frac{\widehat{f^{n}}%
\left( k\right) }{\left( f\left( r\right) \right) ^{n}}=1+\sum%
\limits_{j=1}^{m}\psi _{j}\left( s\right) .
 \]
\end{theorem}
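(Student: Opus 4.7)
The plan is to derive Theorem \ref{T1} directly from the decomposition (\ref{sum}) by taking limits termwise, leveraging the four preliminary lemmas from Section 4. Explicitly, equation (\ref{sum}) writes
\[
r^{k}\sqrt{2\pi n\sigma}\,\frac{\widehat{f^{n}}(k)}{(f(r))^{n}}=\mbox{Re}\Bigl(I_{0}+\sum_{j=1}^{m}I_{j}+J\Bigr),
\]
so it suffices to compute the limit of each of $\mbox{Re}\,I_{0}$, $\mbox{Re}\,I_{j}$ for $1\le j\le m$, and $\mbox{Re}\,J$, as $n\to\infty$, and add them up.

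First I would dispose of $I_{0}$ and $J$. Lemma \ref{L5} applies on the full range $0<\delta<1$ and gives $\mbox{Re}\,I_{0}\to 1$, accounting for the leading $1$ on the right side of the claimed identity. Lemma \ref{L0} likewise applies on the full range and gives $J=o(\lambda^{-N})$ for every $N$, so in particular $\mbox{Re}\,J\to 0$. Both of these contributions are \emph{uniform} over the choice of $j$ and require no case analysis.

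The heart of the argument is splitting the $I_{j}$ into three cases according to where $\delta$ lies relative to $1-l/l_{j}$. The key elementary equivalence $l_{j}>l/(1-\delta)\Leftrightarrow\delta<1-l/l_{j}$ (with analogous statements for equality and reverse inequality) matches the three sub-cases in the definition of $\gamma_{j}$ in Theorem \ref{T1} exactly to the hypothesis ranges of Lemmas \ref{L3} and \ref{L4}. When $l_{j}>l/(1-\delta)$, Lemma \ref{L4} applies with $\gamma_{j}=0$ and yields $\mbox{Re}\,I_{j}\to\psi_{j}(s)$. When $l_{j}=l/(1-\delta)$, Lemma \ref{L4} applies with $\gamma_{j}=\widehat{f}(l_{j})$ and again yields $\mbox{Re}\,I_{j}\to\psi_{j}(s)$. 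When $l_{j}<l/(1-\delta)$, Lemma \ref{L3} applies and gives $I_{j}=o(\lambda^{-N})$; since $\lambda\to\infty$, this means $I_{j}\to 0$, matching $\psi_{j}(s)=0$ via the convention $\gamma_{j}=\infty$, $e^{-\infty}=0$ announced in the theorem statement.

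Summing the contributions yields the desired limit $1+\sum_{j=1}^{m}\psi_{j}(s)$. The main obstacle is bookkeeping rather than new analysis: different values of $j$ may fall into different cases simultaneously, so the sum on the right is in general a genuine mixture of the three possibilities, and one must verify that the boundary values of $\gamma_{j}$ in the theorem statement align seamlessly with the hypotheses of Lemmas \ref{L3} and \ref{L4} (in particular at the critical value $\delta=1-l/l_{j}$, which is the only point where Lemma \ref{L4} is applied with a nonzero $\gamma_{j}$). The special sub-formula for "$l$ even and $j=m$" is already built into both Lemmas \ref{L3} and \ref{L4}, so no separate argument is required for it.
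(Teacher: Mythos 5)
Your proposal is correct and takes essentially the same approach as the paper's proof: decompose via $(\ref{sum})$, use Lemmas \ref{L0} and \ref{L5} to dispose of $J$ and $I_{0}$, and split the $I_{j}$ between Lemma \ref{L3} (when $l_{j}<l/(1-\delta)$) and Lemma \ref{L4} (when $l_{j}\geq l/(1-\delta)$), with the same handling of the boundary case $\delta=1-l/l_{j}$.
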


\begin{proof}
From $\left( \ref{sum}\right) $, $r^{k}\sqrt{2\pi n\sigma }\frac{\widehat{%
f^{n}}\left( k\right) }{\left( f\left( r\right) \right) ^{n}}=\mbox{Re}%
\left( I_{0}+\sum_{j=1}^{m}I_{j}+J\right) $. From Lemma \ref{L0}, $\mbox{Re}%
J\rightarrow 0$, and from Lemma \ref{L5}, $\mbox{Re}I_{0}\rightarrow 1$. If $%
l_{j}<l/\left( 1-\delta \right) $, Lemma \ref{L3} shows that $\mbox{Re}%
I_{j}\rightarrow 0.$ If $l_{j}\geq l/\left( 1-\delta \right) $, Lemma \ref%
{L4} shows that 
 \[
\begin{tabular}{ll}
$\mbox{Re}I_{j}\rightarrow 2\exp \left( -\gamma _{j}\left( 1-\cos \left(
l_{j}\theta _{j}\right) \right) \right) \cos \left( s\theta _{j}-\gamma
_{j}\sin \left( \theta _{j}l_{j}\right) \right) $ & if $l$ is odd or $j\neq
m $,%
\end{tabular}%
 \]%
and%
 \[
\begin{tabular}{ll}
$\mbox{Re}I_{m}\rightarrow \exp \left( -2\gamma _{m}\right) \left( -1\right)
^{s}$ & if $l$ is even.%
\end{tabular}%
 \]
\end{proof}

The next proposition is included as a curiosity: while in specific examples
it seems always easy to prove the non-negativity of the function there
described directly, I could not find a direct proof of the general case
(without appealing to the previous results).

\begin{proposition}
\label{p9}Let $L$ be a finite set of positive integers such that $\gcd
\left\{ k:k\in L\right\} =1.$ Let $l=\min L,$ and assume that $l>1.$ Let $m=%
\left[ l/2\right] .$ For each $j,1\leq j\leq m,$ let 
 \[
l_{j}=\min \left\{ k\in L:jk\not\equiv 0\pmod{l} \right\} .
 \]%
Fix $u\in \left\{ l_{1},l_{2},\ldots ,l_{m}\right\} $, and let%
 \[
\gamma _{j}=\gamma _{j}\left( t\right) =\left\{ 
\begin{tabular}{ll}
$0$ & if $l_{j}>u$ \\ 
$t$ & if $l_{j}=u$ \\ 
$\infty $ & if $l_{j}<u$%
\end{tabular}%
\right. .
 \]%
Let $\theta _{j}=2\pi j/l$, and let $s$ be an integer. Define%
 \[
\psi _{j}\left( s,t\right) =\left\{ 
\begin{tabular}{ll}
$\left( -1\right) ^{s}e^{-2\gamma _{m}}$ & $\mbox{if }l\mbox{ is even and }%
j=m,$ \\ 
&  \\ 
$2e^{-\gamma _{j}\left( 1-\cos \left( l_{j}\theta _{j}\right) \right) }\cos
\left( s\theta _{j}-\gamma _{j}\sin \left( l_{j}\theta _{j}\right) \right) $
& $\mbox{in all other cases.}$%
\end{tabular}%
\right.
 \]%
Then $1+\sum\limits_{j=1}^{m}\psi _{j}\left( s,t\right) \geq 0$ for all $%
t\geq 0$ and all $s$.
\end{proposition}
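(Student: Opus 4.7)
The plan is to exhibit $1 + \sum_{j=1}^m \psi_j(s,t)$ as the asymptotic limit predicted by Theorem \ref{T1} for a polynomial $f$ whose coefficients are all non-negative, forcing $\widehat{f^n}(k)$ and hence the limit to be non-negative.

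Given $L$, $u \in \{l_1,\ldots,l_m\}$, and $t > 0$, consider the polynomial
$$f(z) \;=\; 1 + \frac{1}{l}\,z^l + t\,z^u + \sum_{k \in L \setminus \{l,u\}} c_k\, z^k,$$
where the $c_k$ are any positive reals. Then $L(f) = L$, so the minimum exponent and the critical exponents $l_j$ computed from $f$ coincide with those attached to $L$; the standing normalization $l\widehat{f}(l) = 1$ holds; and every $\widehat{f}(l_j)$ is positive, so by (III), $f$ is strongly positive at all sufficiently small $r > 0$. Set $\delta = 1 - l/u$, which lies in $(0,1)$ because $u > l$ (every $l_j$ strictly exceeds $l$). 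For each large $n$, choose an integer $k_n \approx n^{\delta}$ with $k_n \equiv s \pmod{l}$. Since $l/(1-\delta) = u$, the three cases $l_j > l/(1-\delta)$, $l_j = l/(1-\delta)$, $l_j < l/(1-\delta)$ in Theorem \ref{T1} become exactly $l_j > u$, $l_j = u$, $l_j < u$, and in the middle case $\widehat{f}(l_j) = t$ by construction. Hence the $\gamma_j$ of Theorem \ref{T1} agrees with the $\gamma_j(t)$ of the proposition, and the two $\psi_j$'s are the same function. Theorem \ref{T1} then yields
$$1 + \sum_{j=1}^{m} \psi_j(s,t) \;=\; \lim_{n \to \infty}\, r_n^{k_n}\sqrt{2\pi n\,\sigma_f(r_n)}\,\frac{\widehat{f^n}(k_n)}{(f(r_n))^n},$$
where $r_n$ is defined by $k_n = n\mu_f(r_n)$. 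Every factor on the right is non-negative (the key point being that $f$, and therefore $f^n$, has non-negative coefficients), so the limit is $\geq 0$.

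This settles $t > 0$. For $t = 0$, where the construction above fails to produce a strongly positive $f$, note that $1 + \sum_{j=1}^m \psi_j(s,t)$ is manifestly continuous in $t$ on $[0,\infty)$, so the inequality extends to $t = 0$ by continuity. The main non-routine point is the bookkeeping that the $l_j$'s computed from the constructed polynomial $f$ really match those of the given set $L$; this is precisely why I insist on $L(f) = L$ by placing a strictly positive coefficient on every element of $L$. Once this matching is in place, the argument reduces to reading Theorem \ref{T1} off against the statement of the proposition.
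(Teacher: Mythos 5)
Your argument is essentially the paper's own proof: the paper constructs the same polynomial (with all the auxiliary coefficients equal to $1$), takes $\delta = 1-l/u$ and $k = l\lfloor n^{1-l/u}\rfloor + s$, and invokes Theorem \ref{T1} together with the non-negativity of the coefficients of $p^{n}$ to conclude. Your continuity argument at $t=0$ is a sensible small refinement (the paper's proof tacitly assumes $t>0$, since at $t=0$ the exponent set of the constructed polynomial is no longer $L$), but otherwise the two proofs coincide.
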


\begin{proof}
Consider the polynomial 
 \[
p\left( x\right) =1+\frac{1}{l}x^{l}+\sum\limits_{\begin{array}{c}
 k\in L,  \\ %
k\neq l,k\neq u
\end{array}}x^{k}+tx^{u}.
 \]%
Taking $\delta =1-l/u$ and $k=l\left\lfloor n^{1-l/u}\right\rfloor +s,$ the
asymptotic estimate for the coefficient of $x^{k}$ in $\left( p\left(
x\right) \right) ^{n}$ is, according to Theorem \ref{T1} ,%
 \[
\lim_{n\rightarrow \infty }r^{k}\sqrt{2\pi n\sigma }\frac{\widehat{p^{n}}%
\left( k\right) }{\left( p\left( r\right) \right) ^{n}}=1+\sum%
\limits_{j=1}^{m}\psi _{j}\left( s,t\right) .
 \]

Since $p\left( x\right) $ has no negative coefficients, the right side must
be non-negative.
\end{proof}

The following lemma will be needed in some of the examples in the next
section.

\begin{lemma}
\label{L6}Suppose $r\left( t\right) $ is defined by the equation $\mu
_{p}\left( r\left( t\right) \right) =t^{l}$ for $t>0$, and $r\left( 0\right)
=0$. Then $r\left( t\right) $ is analytic at $t=0.$
\end{lemma}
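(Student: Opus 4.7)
The plan is to factor out the leading behavior of $\mu_p(r)$ at $r=0$ and then reduce the equation $\mu_p(r)=t^l$ to one whose linearization has a nonzero derivative, so that the analytic inverse function theorem applies.

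First, I would observe that $\mu_p(r)=rp'(r)/p(r)$ is analytic at $r=0$ because $p(0)=1$. Combined with (I)(a) and the standing normalization $l\widehat{p}(l)=1$ from Section 3, this gives $\mu_p(r)=r^l g(r)$, where $g$ is analytic in a neighborhood of $0$ and $g(0)=1$. Since $g(0)=1$, the principal branch of the $l$-th root makes $h(r):=g(r)^{1/l}$ well-defined and analytic near $r=0$, with $h(0)=1$.

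Next, for real $r,t\geq 0$ sufficiently small, the equation $\mu_p(r)=t^l$ is $\bigl(r\,h(r)\bigr)^l=t^l$, and taking positive $l$-th roots (which is the correct branch given $r(0)=0$ and $r(t)>0$ for small $t>0$) gives the equivalent equation
\[
\phi(r):=r\,h(r)=t.
\]
Here $\phi$ is analytic at $r=0$ with $\phi(0)=0$ and $\phi'(0)=h(0)=1\neq 0$. The analytic inverse function theorem therefore provides an analytic inverse $\phi^{-1}$ in a neighborhood of $0$, and $r(t)=\phi^{-1}(t)$ is analytic at $t=0$ with $r(0)=0$.

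The only mild subtlety — and really the only non-mechanical step — is justifying the passage from $r^l g(r)=t^l$ to $r\,h(r)=t$: one must check that on real neighborhoods of $0$ this genuinely picks out the branch with $r(0)=0$ and $r(t)>0$ for $t>0$, but this is immediate from the positivity of $g$ near $0$ and the monotonicity of $u\mapsto u^l$ on $[0,\infty)$. Everything else is routine application of analyticity of compositions and of the inverse function theorem.
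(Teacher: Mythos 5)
Your proof is correct, but it takes a genuinely different route from the paper's. The paper avoids extracting an $l$-th root of a power series: it writes $\mu _{p}\left( r\right) =r^{l}+r^{l+1}G\left( r\right) $, makes the blow-up substitution $r=t\rho $, and applies the implicit function theorem to $F\left( t,\rho \right) =\rho ^{l}+t\rho ^{l+1}G\left( t\rho \right) =1$ at the point $\left( 0,1\right) $, where $\frac{\partial F}{\partial \rho }\left( 0,1\right) =l\neq 0$; analyticity of $\rho \left( t\right) $, hence of $r\left( t\right) =t\rho \left( t\right) $, follows. You instead factor $\mu _{p}\left( r\right) =\bigl( r\,g\left( r\right) ^{1/l}\bigr) ^{l}$ using the principal $l$-th root (legitimate since $g\left( 0\right) =1$) and invert the single analytic function $\phi \left( r\right) =r\,g\left( r\right) ^{1/l}$, with $\phi ^{\prime }\left( 0\right) =1$, by the analytic inverse function theorem. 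Your version is more direct -- one variable, one inversion -- at the cost of the branch bookkeeping for the root, which you address; the paper's version never needs a root of a power series and stays with the original coefficients. One small point you pass over quickly (the paper handles it by remarking that $\mu _{p}$ is strictly increasing, so $r\left( t\right) $ is controlled as $t\rightarrow 0^{+}$): to identify the lemma's $r\left( t\right) $ with $\phi ^{-1}\left( t\right) $ you need $r\left( t\right) \rightarrow 0$ as $t\rightarrow 0^{+}$, so that $r\left( t\right) $ lies in the neighborhood where the factorization, the positivity of $g$, and the injectivity of $\phi $ are all valid; this follows from the strict monotonicity of $\mu _{p}$ near $0$ (e.g. from $\sigma _{p}\left( r\right) >0$, or directly from (I)(a)). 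With that sentence added, your argument is complete and yields the same conclusion as the paper's.
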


\begin{proof}
We can write $\mu _{p}\left( r\right) =r^{l}+r^{l+1}G\left( r\right) $ where 
$G\left( r\right) $ is analytic at $r=0$. Let $\rho \left( t\right) =r\left(
t\right) /t$, $t>0$, and $\rho \left( 0\right) =1.$ Then the equation
defining $r\left( t\right) $ gives us%
 \[
\left( \rho \left( t\right) \right) ^{l}+t\left( \rho \left( t\right)
\right) ^{l+1}G\left( r\left( t\right) \right) =1.
 \]%
Since $\mu _{p}$ is strictly increasing as a function of $r$, $r\left(
t\right) $ is clearly bounded as $t\rightarrow 0^{+}.$ Hence letting $t$
approach $0$ in the above equation, we find \ that $\rho \left( t\right) $
is continuous at $t=0$. If we let $F\left( t,\rho \right) =\rho ^{l}+t\rho
^{l+1}G\left( t\rho \right) $, we have $\frac{\partial F}{\partial \rho }%
\left( 0,1\right) =l>0.$ By the implicit function theorem, $\rho \left(
t\right) $ defined by $F\left( t,\rho \left( t\right) \right) =1$ exists and
is unique, and it is analytic at $t=0$. So $r\left( t\right) =t\rho \left(
t\right) $ is also analytic at $t=0.$
\end{proof}

\section{Examples}

We conclude with a few examples. Example \ref{ex1} is one of the simplest
cases, and Example \ref{ex2} was chosen because it does not seem immediately
evident that the corresponding function $\psi _{j}\left( s,t\right) $ is
non-negative. Example \ref{ex3} illustrates a case in
which the set $\left\{ l_{j}:1\leq j\leq \left\lfloor l/2\right\rfloor
\right\} $ has more than one element, and Example \ref{ex4} is used to show that sometimes the estimate 
provided by Theorem \ref{T1} is only an upper bound. 

\begin{example}
\label{ex1}Let $p\left( x\right) =1+\frac{1}{3}x^{3}+bx^{4}$, where $b$ is
a positive number. So we have (in the notation of Theorem \ref{T0}) $l=3$, $%
m=1,$ $l_{1}=4,$ and $\varepsilon _{0}=1/4.$ So if $1/4<\delta <1$, $%
k\approx n^{\delta }$, and $r$ is defined by $k=n\mu _{p}\left( r\right) $,
the asymptotic expansion of Theorem \ref{T0} applies. An explicit expression
for the dominant term as a function of $n$ and $k$ alone will in general
depend on the size of $\delta .$

We now choose $\delta =1/4,$ $n=k^{4},$ and we derive the asymptotic
estimate in terms of $k$ for the coefficient of $x^{k}$ in $\left( p\left(
x\right) \right) ^{k^{4}}$ when $k$ is large, as given by Theorem \ref{T1}.
The power expansion of $\mu _{p}\left( r\right) $ at $r=0$ is 
 \[
\mu _{p}\left( r\right) =r^{3}+4br^{4}+O\left( r^{6}\right) .
 \]%
In this case, the equation $n\mu _{p}\left( r\right) =k$ defining $r$ becomes 
$\mu _{p}\left( r\right) =1/k^{3}$, and so $r\simeq n^{-1/4}=1/k$. In order
to find the asymptotic behaviour of the terms $r^{k}$ and $\left( p\left(
r\right) \right) ^{n}$ appearing in Theorem \ref{T1}, we need to find higher
order terms for $r$ as a function of $k$. Using Lemma \ref{L6}, we expand $r$
as a power series in $1/k:$%
 \begin{equation}
r=c_{1}\frac{1}{k}+c_{2}\frac{1}{k^{2}}+\cdots .  \label{r}
 \end{equation}%
So $c_{1}=1$, and substituting $\left( \ref{r}\right) $ into $%
1/k^{3}=r^{3}+4br^{4}+O\left( r^{6}\right) $, we find $c_{2}=-4b/3.$ Hence%
\begin{eqnarray*}
r &=&\frac{1}{k}-\frac{4b}{3k^{2}}+O\left( \frac{1}{k^{3}}\right) \\
&=&\frac{1}{k}\left( 1-\frac{4b}{3k}+O\left( \frac{1}{k^{2}}\right) \right)
\end{eqnarray*}%
and we conclude that 
 \[
r^{k}\simeq \frac{1}{k^{k}}\exp \left( -\frac{4b}{3}\right)
 \]%
as $k\rightarrow \infty .$

In a similar way, expanding $p\left( r\right) $ we find%
 \[
p\left( r\right) =1+\frac{1}{3k^{3}}-\frac{b}{3k^{4}}+O\left( \frac{1}{k^{5}}%
\right) ,
 \]%
and so%
 \[
\left( p\left( r\right) \right) ^{k^{4}}\simeq \exp \left( \frac{k}{3}-\frac{%
b}{3}\right) .
 \]%
Finally, $\gamma _{1}=b$, $\theta _{1}=2\pi /3$, and using (I)(b) of Section
2, $\sqrt{2\pi n\sigma _{p}\left( r\right) }\simeq \sqrt{6\pi k}$, so the
estimate of Theorem \ref{T1} gives us%
\begin{eqnarray*}
\widehat{p^{k^{4}}}\left( k\right) &=&\sum\limits_{\begin{array}{c}
 3v+4w=k  \\ %
u+v+w=k^4
\end{array} }
\frac{\left( k^{4}\right) !}{u!v!w!}\left( \frac{1}{3}\right)
^{v}b^{w} \\
&\simeq &\frac{k^{k}e^{k/3}}{\sqrt{6\pi k}}e^{b}\left( 1+2e^{-3b/2}\cos
\left( \frac{2\pi k}{3}-\frac{b\sqrt{3}}{2}\right) \right) .
\end{eqnarray*}
\end{example}

\begin{example}
\label{ex2}Let $L=\left\{ 15,20,21\right\} .$ Then $l=15$, and $\left\{
l_{j}:1\leq j\leq 7\right\} =\left\{ 20,21\right\} .$ Choosing $u=21$ in
Proposition \ref{p9}, we have $l_{j}=u$ for $j=3,6$, and so the function%
\begin{eqnarray*}
g\left( s,t\right) &=&1+2e^{-t\left( 5-\sqrt{5}\right) /4}\cos \left( \frac{%
2\pi s}{5}-t\sqrt{\frac{5+\sqrt{5}}{8}}\right) \\
&&\mbox{ \ }+2e^{-t\left( 5+\sqrt{5}\right) /4}\cos \left( \frac{4\pi s}{5}%
-t\sqrt{\frac{5-\sqrt{5}}{8}}\right)
\end{eqnarray*}%
is non-negative for all $t\geq 0$ and all integers $s.$
\end{example}

\begin{example}
\label{ex3}Let $p\left( x\right) =1+\frac{1}{9}x^{9}+bx^{15}+cx^{25},$
where $b$ and $c$ are positive numbers. So we have $%
l=9,l_{1}=l_{2}=l_{4}=15,l_{3}=25$ and $\left\{ 1-l/l_{j}:1\leq j\leq
4\right\} =\left\{ 2/5,16/25\right\} .$ Hence $\varepsilon _{0}=16/25,$ and
Theorem \ref{T0} applies when $k\approx n^{\delta }$ with $16/25<\delta <1.$

To illustrate Theorem \ref{T1}, we first choose $\delta =16/25,$ $%
n=m^{25},k=m^{16},$ and we derive the asymptotic estimate of the coefficient
of $x^{k}$ in $\left( p\left( x\right) \right) ^{n}$ when $m$ is large. The
equation $n\mu _{p}\left( r\right) =k$ gives us $\mu _{p}\left( r\right)
=1/m^{9}$, and we need to expand $r$ as a power series in $1/m$ in order to
estimate $r^{k}$ and $\left( p\left( r\right) \right) ^{n}.$ Since $r\simeq
1/m,$ $p\left( r\right) \simeq 1,$ $k=m^{16}$ and $n=m^{25},$ we need to
expand $r$ up to terms of order $1/m^{17}$ and $p\left( r\right) $ up to
terms of order $1/m^{25}.$ After some extensive computations, we find:%
\[
r=\frac{1}{m}\left( 1-\frac{5b}{3m^{6}}+\frac{1}{81m^{9}}+\frac{275b^{2}%
}{9m^{12}}-\frac{53b}{243m^{15}}-\frac{25c}{9m^{16}}+O\left( \frac{1}{%
m^{17}}\right) \right)
\]%
and 
 \[
\log \left( mr\right) =-\frac{5b}{3m^{6}}+\frac{1}{81m^{9}}+\frac{175b^{2}%
}{6m^{12}}-\frac{16b}{81m^{15}}-\frac{25c}{9m^{16}}+O\left( \frac{1}{%
m^{17}}\right) .
 \]%
Hence%
 \[
r^{k}\simeq \frac{1}{m^{k}}\exp \left( -\frac{5b}{3}m^{10}+\frac{1}{81}%
m^{7}+\frac{175b^{2}}{6}m^{4}-\frac{16b}{81}m-\frac{25c}{9}\right) .
 \]%
Similarly,%
 \[
p\left( r\right) =1+\frac{1}{9m^{9}}-\frac{2b}{3m^{15}}+\frac{1}{81m^{18}}%
+\frac{50b^{2}}{3m^{21}}-\frac{16b}{81m^{24}}-\frac{16c}{9m^{25}}+O\left( 
\frac{1}{m^{26}}\right)
 \]%
and%
 \[
\log p\left( r\right) =\frac{1}{9m^{9}}-\frac{2b}{3m^{15}}+\frac{1}{%
162m^{18}}+\frac{50b^{2}}{3m^{21}}-\frac{10b}{81m^{24}}-\frac{16c}{9m^{25}%
}+O\left( \frac{1}{m^{26}}\right) .
 \]%
Hence%
 \[
\left( p\left( r\right) \right) ^{n}\simeq \exp \left( \frac{1}{9}m^{16}-%
\frac{2b}{3}m^{10}+\frac{1}{162}m^{7}+\frac{50b^{2}}{3}m^{4}-\frac{10b}{%
81}m-\frac{16c}{9}\right) .
 \]%
We can also (more easily) compute%
 \[
\sqrt{2\pi n\sigma _{p}\left( r\right) }\simeq 3m^{8}\sqrt{2\pi }
 \]%
and 
 \[
1+\sum\limits_{j=1}^{4}\psi _{j}\left( s\right) =1+2e^{-3c/2}\cos \left( 
\frac{2\pi s}{3}-\frac{\sqrt{3}}{2}c\right) ,
 \]%
and so we obtain the estimate%
\begin{eqnarray*}
\widehat{p^{n}}\left( k\right) &=&\sum\limits_{\begin{array}{c}
 9v+15w+25z=m^{16} \\ 
u+v+w+z=m^{25}
\end{array}}
\frac{\left( m^{25}\right) !}{u!v!w!z!}\left( \frac{1}{9}%
\right) ^{v}b^{w}c^{z} \\
&\simeq &\frac{m^{k}}{3\sqrt{2\pi k}}e^{P\left( m\right) }\left(
1+2e^{-3c/2}\cos \left( \frac{2\pi k}{3}-\frac{\sqrt{3}}{2}c\right) \right)
\\
\mbox{as }m &\rightarrow &\infty ,\mbox{ where }n=m^{25},k=m^{16},\mbox{ and 
} \\
P\left( m\right) &=&\frac{1}{9}m^{16}+bm^{10}-\frac{1}{162}m^{7}-\frac{%
25b^{2}}{2}m^{4}+\frac{2b}{27}m+c
\end{eqnarray*}
\end{example}

\begin{example}
\label{ex4}We use the same polynomial of Example \ref{ex3}, but we choose $%
\delta =1-9/15=2/5,$ $n=m^{15},k=m^{6}.$ Using the expansion for $r$ found
in Example \ref{ex3}, we obtain%
\begin{eqnarray*}
r^{k} &\simeq &\left( \frac{1}{m}\right) ^{m^{6}}e^{-5b/3}, \\
\left( p\left( r\right) \right) ^{n} &\simeq &\exp \left( \frac{m^{6}}{9}-%
\frac{b}{6}\right) , \\
\sqrt{2\pi n\sigma _{p}\left( r\right) } &\simeq &3m^{3}\sqrt{2\pi }, \\
1+\sum\limits_{j=1}^{4}\psi _{j}\left( s\right) &=&\left( 1+2\cos \frac{%
2\pi k}{3}\right) \left( 1+2e^{-3b/2}\cos \left( \frac{2\pi k}{9}+\frac{b%
\sqrt{3}}{2}\right) \right) .
\end{eqnarray*}%
Note that the last expression is identically zero if $k$ is not a multiple
of $3$. If $k$ is a multiple of $3$, Theorem \ref{T1} gives us 
\begin{eqnarray*}
\widehat{p^{n}}\left( k\right) &=&\sum\limits_{\begin{array}{c}
 9v+15w+25z=m^{6} \\ 
u+v+w+z=m^{15}
\end{array}}
\frac{\left( m^{15}\right) !}{u!v!w!z!}\left( \frac{1}{9}%
\right) ^{v}b^{w}c^{z} \\
&\simeq &\frac{m^{k}e^{k/9}}{\sqrt{2\pi k}}e^{b}\left( 1+2e^{-3b/2}\cos
\left( \frac{2\pi k}{9}+\frac{b\sqrt{3}}{2}\right) \right) \mbox{\ } \\
\mbox{as }m &\rightarrow &\infty ,\mbox{\ where }n=m^{15},k=m^{6},k\equiv
0\pmod{3} .
\end{eqnarray*}%
If $k$ is not a multiple of $3$, the factor $\frac{m^{k}e^{k/9}}{3\sqrt{%
2\pi k}}e^{b}$ is only an upper bound for $\widehat{p^{n}}\left( k\right) .$
\end{example}









\end{document}